\chardef\bslash=`\\ % p.  424, TeXbook %\newcommand{\ntt}{\seriesm\shape n\tt}
\def\verbatim{\interlinepenalty\@M \@verbatim
\leftskip\@totalleftmargin\advance\leftskip2pc
\frenchspacing\@vobeyspaces \@xverbatim} \makeatother \hfuzz1pc
\def\dgt@k{\dg@DX=-3 \dg@DY=2 \dg@SIZE=3}
\def\dgt@kk{\dg@DX=3 \dg@DY=-1 \dg@SIZE=3}% \makeatother
\theoremstyle{plain} \newtheorem{thm}{Theorem}[section]
\newtheorem{cor}[thm]{Corollary}
\newtheorem{lemma}[thm]{Lemma}
\newtheorem{prop}[thm]{Proposition}
\theoremstyle{definition} \newtheorem{rem}[thm]{Remark}
\newcommand{\RR}{\bar{\mathbb R}}
\newcommand{\umet}{\mathrm{UMET}}
\newcommand{\cumet}{\mathrm{CUMET}}
\begin{document}

\title[]
{Max-min measures on ultrametric spaces}
\author[Matija Cencelj]{Matija Cencelj}
\address{Institute of Mathematics, Physics and Mechanics, and Faculty of Education,
University of Ljubljana, P.O.B. 2964, Ljubljana, 1001, Slovenia}
\email{matija.cencelj@guest.arnes.si}

\author[D. Repov\v s]{Du\v{s}an~Repov\v{s}}
\address{Faculty of Mathematics and Physics, and Faculty of Education,
University of Ljubljana, P.O.B. 2964, Ljubljana, 1001, Slovenia}
\email{dusan.repovs@guest.arnes.si}

\author[M. Zarichnyi]{ Michael Zarichnyi}
\address{Department of Mechanics and Mathematics,
Lviv National University, Universytetska Str. 1, 79000 Lviv, Ukraine}
\address{Institute of Mathematics,
University of Rzesz\'ow, Rejtana 16 A, 35-310 Rze\-sz\'ow, Poland}
\email{mzar@litech.lviv.ua}

\thanks{}
\subjclass[2010]{32P05, 54B30, 60B05}

\keywords{Max-min measure, ultrametric space, probability measure, idempotent mathematics, Dirac measure, ultrametric}
\date{\today}
%\dedicatory{}
%\commby{}

%%% ----------------------------------------------------------------------

\begin{abstract} The ultrametrization of the set of all probability measures of compact support on the ultrametric spaces was first defined by Hartog and de Vink. In this paper we consider a similar construction for the so called max-min measures on  the ultrametric spaces. In particular, we prove that the functors max-min measures and idempotent measures are isomorphic. However, we show that this is not the case for the monads generated by these functors.
\end{abstract}

%%% ----------------------------------------------------------------------
\maketitle
%%% ----------------------------------------------------------------------
\section{Introduction}

The ultrametric spaces naturally appear not only in
different parts of mathematics, in particular, in real-valued
analysis, number theory and general topology, but also have applications in biology, physics, theoretical computer science etc (see e.g.
\cite{Ha, RTV, VR}).

The probability measures of compact support on the ultrametric spaces were investigated by different authors. In particular, Hartog and de Vink \cite{Ha} defined an ultrametric on the set of all such measures. The  properties of the obtained construction were established in \cite{HZ} and \cite{VR}.

The aim of this paper is to find analogs of these results for the other classes of measures. We define the so called max-min measures, which play a similar role to that of probability measures in the idempotent mathematics, i.e., the part of mathematics which is obtained by replacing the usual arithmetic operations by idempotent operations (see \cite{ LMS, MS}). The methods and results of idempotent mathematics find numerous applications \cite{A,au,be}.

Note that the max-min measures are non-additive. The class of non-additive measures finds numerous applications, in particular, in mathematical economics, multicriteria decision making, image processing (see, e.g., \cite{GMS}).

In the case of max-min measures, we start with such measures of finite supports; the general case (max-min measures of compact supports) is obtained by passing to the completions.

One of our results shows that the functors of max-min measures and the idempotent measures in the category of ultrametric spaces and nonexpanding maps are isomorphic. However, we show that the monads generated by these functors are not isomorphic.

\section{Preliminaries}
\subsection{Max-min-measures}

By $\RR$ we denote the extended real line, $\RR=\mathbb R\cup\{-\infty,\infty\}$. Let $\wedge$ and $\vee$ denote the operations max and min in $\RR$, respectively. Following the traditions of the idempotent mathematics we denote by $\odot$ the addition (convention $-\infty \odot x=x$ for all $x\in\RR$, $x<\infty$).

Let $X$ be a topological space. As usual, by $C(X)$ we denote the linear space of (real-valued) continuous functions on $X$. The set $C(X)$ is a lattice with respect to the pointwise maximum and minimum and we preserve the notation $\wedge$ and $\vee$ for these operations.

Given $x\in X$, by $\delta_x$ we denote the Dirac measure in $X$ concentrated at $x$.
Given $x_i\in X$ and $\alpha_i\in\RR$, $i=1,\dots,n$, such that $\wedge_{i=1}^n\alpha_i=\infty$,  we denote by $\vee_{i=1}^n\alpha_i\wedge\delta_{x_i}$ the functional on $C(X)$ defined as follows:
$$\vee_{i=1}^n\alpha_i\wedge\delta_{x_i}(\varphi)=\vee_{i=1}^n\alpha_i\wedge\varphi(x_i).$$

Let us denote by $J_\omega(X)$ the set of all such functionals. We call the elements of $J_\omega(X)$ the {\em max-min measures} of finite support on $X$. The term `measure' means nothing but the fact that $\mu=\vee_{i=1}^n\alpha_i\wedge\delta_{x_i}\in J_\omega(X)$ can also be  interpreted as a set function with values in the extended real line: $\mu(A)=\vee\{\alpha_i\mid x_i\in A\}$, for any $A\subset X$.

The {\em support} of $\mu= \vee_{i=1}^n\alpha_i\wedge\delta_{x_i}\in J_\omega(X)$ is the set $$\mathrm{supp}(\mu)=\{x_i\mid i=1,\dots,n,\ \alpha_i>-\infty\}\subset X.$$

For any map $f\colon X\to Y$ of topological spaces, define the map $J_\omega(f)\colon J_\omega(X)\to J_\omega(Y)$ by the formula:
$$J_\omega(f)(\vee_{i=1}^n\alpha_i\wedge\delta_{x_i})=\vee_{i=1}^n\alpha_i\wedge\delta_{f(x_i)}.$$

Let us recall that $I_\omega(X)$ denotes the set of functionals of the form $\vee_i\alpha_i\odot\delta_{x_i}$, where $\alpha_i\in\RR$ and $\vee_i\alpha_i=0$.
If $\varphi\in C(X)$, then $(\vee_i\alpha_i\odot\delta_{x_i})(\varphi)=\vee_i\alpha_i\odot\varphi(x_i)$. See e.g. \cite{Z}, for the theory of spaces  $I_\omega(X)$ (called the spaces of idempotent measures of finite support) as well as related spaces $I(X)$ (called the spaces of idempotent measures of compact support). Recall that the {\em support} of $\mu= \vee_{i=1}^n\alpha_i\odot\delta_{x_i}\in I_\omega(X)$ is the set $$\mathrm{supp}(\mu)=\{x_i\mid i=1,\dots,n,\ \alpha_i>-\infty\}\subset X.$$

\begin{rem} We adopt the following conventions: $+\infty\wedge\delta_x=\delta_x$ in $J_\omega(X)$ and $0\odot\delta_x=\delta_x$ in $I_\omega(X)$.
\end{rem}
\subsection{Ultrametric spaces}

Recall that a metric $d$ on a set $X$ is said to be an {\it
ultrametric} if the following strong triangle inequality holds:
$$d(x,y)\le\max\{d(x,z),d(z,y)\}$$
for all $x,y,z\in X$.

 By $O_r(A)$
we denote the $r$-neighborhood of a set $A$ in a metric space. We
write $O_r(x)$ if $A=\{x\}$. It is well-known that in the ultrametric spaces, for any $r>0$, every two distinct elements of the family $\mathcal O_r=\{O_r(x)\mid x\in X\}$ are disjoint. We denote by $\mathcal F_r$ the set of all functions on $X$ that are constant on the elements of the family $\mathcal O_r$. By $q_r\colon X\to X/\mathcal O_r$ we denote the quotient map. We endow the set $X/\mathcal O_r$ with the quotient metric, $d_r$. It is easy to see that $d_r(O_r(x),O_r(y))=d(x,y)$, for any disjoint $O_r(x)$, $O_r(y)$, and the obtained metric is an ultrametric.

 Recall that a map $f\colon X\to Y$,
where $(X,d)$ and $(Y,\varrho)$ are metric spaces, is called {\it
nonexpanding} if $\varrho(f(x),f(y))\le d(x,y)$, for every $x,y\in
X$.
Note that the quotient map $q_r\colon X\to X/\mathcal O_r$ is nonexpanding.

\subsection{Hyperspaces and symmetric powers}

By $\exp X$ we denote the set of all nonempty compact  subsets in
$X$ endowed with the Hausdorff metric:
$$d_H(A,B)=\inf\{\varepsilon>0\mid A\subset O_\varepsilon(B),\ B\subset O_\varepsilon(A)\}.$$

We say that $\exp X$ is the {\em hyperspace} of $X$.
For a continuous
map $f\colon X\to Y$ the map $\exp f\colon\exp X\to\exp Y$ is
defined as $(\exp f)(A)=f(A)$.

It is well-known that $\exp f$ is a nonexpanding map if so is $f$.
We denote by $s_X\colon X\to \exp X$ the singleton map,
$s_X(x)=\{x\}$.

By $S_n$  we denote the group of permutations of the set
$\{1,2,\dots,n\}$. Every subgroup $G$ of the group $S_n$ acts on the $n$-th
power $X^n$ of the space  $X$ by the permutation of factors. Let
$SP^n_G(X)$ denote the orbit space of this action. By
$[x_1,\dots,x_n]$ (or briefly $[x_i]$) we denote the orbit containing $(x_1,\dots,x_n)\in X^n$.

If $(X,d)$ is a metric space, then  $SP^n_G(X)$ is endowed with the following metric $\tilde d$,
$$\tilde d([x_i],[y_i])=\min\{\max\{d(x_i,y_{\sigma(i)}) \mid i=1,\dots,n\}\mid\sigma\in G\}.$$

It is known that the space $(SP^n_G(X),\tilde d)$ is ultrametric if such is $(X,d)$.

Define the map $\pi_G=\pi_{GX}\colon X^n\to SP^n_G(X)$ by the formula $\pi_G(x_1,\dots,x_n)=[x_1,\dots,x_n]$. It is shown in \cite{HZ} (and easy to see) that the map $\pi_G$ is nonexpanding.

\subsection{Monads}

We recall some necessary definitions from the category theory;
see, e.g., \cite{BW,ML} for details.
A {\it monad} $\mathbb T=(T,\eta,\mu)$ in the category ${\mathcal
E}$ consists of an endofunctor $T\colon {\mathcal E}\to{\mathcal
E}$ and natural transformations $\eta\colon 1_{\mathcal E}\to T$
(unity), $\mu\colon T^2=T\circ T\to T$ (multiplication) satisfying
the relations $\mu\circ T\eta=\mu\circ\eta_T=${\bf 1}$_T$ and
$\mu\circ\mu_T=\mu\circ T\mu$.

Given two monads, $\mathbb T=(T,\eta,\mu)$ and $\mathbb
T'=(T',\eta',\mu')$, we say that a natural transformation
$\alpha\colon T\to T'$ is a {\em morphism} of $\mathbb T$ into
$\mathbb T'$ if $\alpha\eta=\eta'$ and
$\mu'\alpha_TT(\alpha)=\alpha\mu$.

We denote by $\umet$ the category of ultrametric spaces and
nonexpanding maps. One of examples of monads on the category
$\umet$ is the hyperspace monad $\mathbb H=(\exp,s,u)$. The
singleton map $s_X\colon X\to \exp X$ is already defined and the
map $u_X\colon \exp^2X\to\exp X$ is the union map, $u_X(\mathcal
A)=\cup\mathcal A$.

It is well-known (and easy to prove) that the max-metric on the finite product of ultrametric spaces is an ultrametric. We will always endow the product with this ultrametric.

{\it The Kleisli category of a monad $\mathbb T$}
is a category  $\mathcal C_{\mathbb T} $ defined by the conditions: $| {\mathcal C}_{\mathbb T}  |=| \mathcal C  |$,
${\mathcal C}_{\mathbb T}  (X,Y) =\mathcal C (X,T(Y))$, and the composition $g \ast f$ of morphisms
  $f\in {\mathcal C}_{\mathbb T}  (X,Y)$,
$g \in {\mathcal C}_{\mathbb T}  (Y,Z)$ is given by the formula $g \ast f= \mu_Z T(g)  f$.

Define the functor $\Phi_{\mathbb T}\colon \mathcal C \to \mathcal C _{\mathbb T} $ by $$\Phi_{\mathbb T} (X)=X, \quad \Phi_{\mathbb T} (f)=\eta_Y  f, \quad X
\in |
\mathcal C |
,\ f \in \mathcal C (X,Y).$$

A functor $ \overline F \colon\mathcal C _{\mathbb T} \to \mathcal C _{\mathbb T} $ is called
an~{\it extension of the functor $F\colon \mathcal C \to \mathcal C $ on the
Kleisli category
$\mathcal C _{\mathbb T} $} if $\Phi_{\mathbb T} F=\overline F \Phi_{\mathbb T} $.

The proof of the following theorem can be found in \cite{Vi}.

\begin{thm}\label{ext_kleisli}
There exists a bijective correspondence between the ex\-ten\-sions of
functor~$F$ onto the~Kleisli  category
$\mathcal C_{\mathbb T}$ of a monad~${\mathbb T}$
and the natural transformations $\xi\colon FT\to TF$
satisfying
\begin{enumerate}
\item[1)]$\xi F(\eta) =\eta_F$;
\item[2)]$\mu_F T(\xi)\xi_T=\xi F(\mu)$.
\end{enumerate}
\end{thm}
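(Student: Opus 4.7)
The plan is to exhibit explicit constructions in both directions and verify they are mutually inverse. In one direction, given an extension $\overline F$, regard $1_{TX}\in\mathcal C(TX,TX)$ as a Kleisli morphism in $\mathcal C_{\mathbb T}(TX,X)$ and define
$$\xi_X:=\overline F(1_{TX})\in\mathcal C(FTX,TFX).$$
In the other direction, given $\xi\colon FT\to TF$ satisfying (1) and (2), set $\overline F(X)=F(X)$ on objects and, for a Kleisli morphism $f\in\mathcal C(X,TY)$, put $\overline F(f):=\xi_Y\circ F(f)$.

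For the forward construction I would verify that the $\overline F$ produced this way is a functor extending $F$. Preservation of the Kleisli identity at $X$, namely $\overline F(\eta_X)=\xi_X\circ F(\eta_X)=\eta_{FX}$, is precisely condition~(1); the extension property $\overline F\Phi_{\mathbb T}=\Phi_{\mathbb T}F$ reduces to $\overline F(\eta_Y\circ h)=\eta_{FY}\circ F(h)$ for $h\colon X\to Y$, which follows from~(1) together with naturality of $\eta$. Preservation of Kleisli composition is the substantive step: for $f\in\mathcal C_{\mathbb T}(X,Y)$ and $g\in\mathcal C_{\mathbb T}(Y,Z)$ one unfolds
$$\overline F(g\ast f)=\xi_Z F(\mu_Z)F(T(g))F(f),\qquad \overline F(g)\ast\overline F(f)=\mu_{FZ}T(\xi_Z)T(F(g))\xi_Y F(f),$$
then uses naturality of $\xi$ to replace $T(F(g))\xi_Y$ by $\xi_{TZ}F(T(g))$, and finally applies~(2) in the form $\mu_{FZ}T(\xi_Z)\xi_{TZ}=\xi_Z F(\mu_Z)$ to identify the two sides. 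This interlocking use of naturality of $\xi$ with condition~(2) is the main bookkeeping obstacle.

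For the reverse construction I would exploit two decompositions valid in $\mathcal C_{\mathbb T}$: every Kleisli morphism $f\colon X\to TY$ satisfies $f=1_{TY}\ast\Phi_{\mathbb T}(f)$, by the unit law $\mu_Y\eta_{TY}=1_{TY}$, and for every $h\colon X\to Y$ in $\mathcal C$ one has $T(h)=\Phi_{\mathbb T}(h)\ast 1_{TX}$, by $\mu_Y T(\eta_Y)=1_{TY}$. Applying $\overline F$ to the first identity yields the formula $\overline F(f)=\xi_Y\circ F(f)$; applying it to the second yields $\overline F(T(h))=T(F(h))\circ\xi_X$, and comparing with $\xi_Y\circ FT(h)$ obtained from the first decomposition produces the naturality square of $\xi$. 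Condition~(1) records preservation of the Kleisli identity, while~(2) comes out of applying $\overline F$ to the Kleisli composite $1_{TX}\ast 1_{T^2X}=\mu_X$ and reading off $\xi_X F(\mu_X)=\mu_{FX}T(\xi_X)\xi_{TX}$. Finally, the two assignments are mutually inverse: from $\xi$ the extracted datum is $\overline F(1_{TX})=\xi_X\circ F(1_{TX})=\xi_X$, and from $\overline F$ the formula $\xi_Y\circ F(f)$ agrees with $\overline F(f)$ by the first decomposition above combined with functoriality of $\overline F$.
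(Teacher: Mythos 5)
The paper does not prove this theorem at all: it simply cites Vin\'arek \cite{Vi} for the proof, so there is no in-text argument to compare yours against. Your proposal is the standard proof of this correspondence (essentially the Kleisli-category half of the theory of distributive laws), and it is correct: the two constructions $\overline F\mapsto\xi_X=\overline F(1_{TX})$ and $\xi\mapsto\bigl(f\mapsto\xi_Y\circ F(f)\bigr)$ are the right ones, the composition check via naturality of $\xi$ followed by condition (2) goes through, and the two decompositions $f=1_{TY}\ast\Phi_{\mathbb T}(f)$ and $T(h)=\Phi_{\mathbb T}(h)\ast 1_{TX}$ do yield, respectively, the formula for $\overline F$ and the naturality of the extracted $\xi$, while $1_{TX}\ast 1_{T^2X}=\mu_X$ yields condition (2). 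The only (harmless) slip is that the extension property $\overline F(\eta_Y\circ h)=\eta_{FY}\circ F(h)$ needs only condition (1) and functoriality of $F$, not naturality of $\eta$; naturality of $\eta$ is instead what you use, correctly, to simplify $\xi_Y\ast\Phi_{\mathbb T}(F(f))$ to $\xi_Y\circ F(f)$ in the reverse direction. In short, your proof supplies a complete argument for a statement the paper only quotes.
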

\section{Ultrametric on the set of max-min measures}

Let $(X,d)$ be an ultrametric space. For any $\mu,\nu\in J_\omega(X)$ , let $$\hat d(\mu,\nu)=\inf\{r>0\mid \mu(\varphi)=\nu(\varphi)\text{, for any }\varphi\in C(X)\}.$$
Since $\mu,\nu$ are of finite support, it is easy to see that $\hat d$ is well defined.

\begin{thm} The function $\hat d$ is an ultrametric on the set $J_\omega(X)$.
\end{thm}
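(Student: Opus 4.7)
The definition of $\hat d$ as printed leaves $r$ out of the condition; in light of the preceding introduction of $\mathcal F_r$, the natural reading (which I would adopt) is
\[
\hat d(\mu,\nu) = \inf\{r>0 \mid \mu(\varphi) = \nu(\varphi)\text{ for every }\varphi \in \mathcal F_r\}.
\]
Under this reading, non-negativity and symmetry are immediate, so the plan is to verify the strong triangle inequality and the non-degeneracy $\hat d(\mu,\nu)=0\Rightarrow\mu=\nu$.

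The key structural observation I would use throughout is monotonicity of the family $\mathcal F_r$: for $r < r'$ the partition $\mathcal O_{r'}$ is coarser than $\mathcal O_r$, so $\mathcal F_{r'} \subseteq \mathcal F_r$. Consequently the set $R(\mu,\nu) := \{r > 0 : \mu|_{\mathcal F_r} = \nu|_{\mathcal F_r}\}$ is upward closed in $(0,\infty)$. The strong triangle inequality would then follow in one line: for any $s > \max\{\hat d(\mu,\rho),\hat d(\rho,\nu)\}$, upward closure places $s$ in both $R(\mu,\rho)$ and $R(\rho,\nu)$, hence $\mu(\varphi) = \rho(\varphi) = \nu(\varphi)$ for every $\varphi\in\mathcal F_s$, so $s\in R(\mu,\nu)$ and $\hat d(\mu,\nu) \le s$; letting $s$ decrease to the maximum gives the required inequality.

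For non-degeneracy I would use the finiteness of the supports. Assuming $\hat d(\mu,\nu) = 0$, set $F = \supp(\mu) \cup \supp(\nu)$ and $r_0 = \min\{d(x,y) : x,y\in F,\ x\ne y\}>0$. For any $r\in(0,r_0)$ each element of $F$ is alone in its $r$-ball, and given any $\varphi\in C(X)$ I would build $\tilde\varphi\in\mathcal F_r$ by setting it to $\varphi(x)$ on the $r$-ball of each $x\in F$ and, say, $0$ elsewhere; because ultrametric balls are clopen, $\tilde\varphi$ is automatically continuous. The explicit formula $\mu(\varphi) = \bigvee_i \alpha_i \wedge \varphi(x_i)$ shows that $\mu(\varphi)$ depends only on $\varphi|_{\supp\mu}$, so $\mu(\tilde\varphi) = \mu(\varphi)$ and similarly for $\nu$. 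Combining this with $\mu(\tilde\varphi) = \nu(\tilde\varphi)$ (which holds because $r\in R(\mu,\nu)$) yields $\mu(\varphi)=\nu(\varphi)$ for the arbitrary $\varphi$, i.e.\ $\mu=\nu$.

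The main obstacle I anticipate is precisely this reduction in the non-degeneracy step from arbitrary $\varphi\in C(X)$ to functions in $\mathcal F_r$; the argument relies crucially on the clopen structure of ultrametric balls to provide continuous extensions without any smoothing issue. Everything else reduces to the upward-closure observation.
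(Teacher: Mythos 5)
Your reading of the definition is the one the paper's own proof actually uses (the condition must quantify over $\varphi\in\mathcal F_r$, not over all of $C(X)$ as printed), and your argument for the strong triangle inequality is the same as the paper's: both rest on the monotonicity $\mathcal F_{r'}\subseteq\mathcal F_r$ for $r<r'$, i.e.\ the upward closure of your set $R(\mu,\nu)$, which the paper uses implicitly when it passes from $\hat d(\mu,\tau)<r$ to agreement on all of $\mathcal F_r$. The paper stops there (``we only have to check the strong triangle inequality''), whereas you also verify separation via the clopen-ball extension $\tilde\varphi$; that step is correct and is a genuine addition, since it is exactly where the finiteness of the supports is needed. The one point neither you nor the paper makes explicit is that the infimum is taken over a nonempty set: for $r$ exceeding the diameter of $\supp\mu\cup\supp\nu$ every $\varphi\in\mathcal F_r$ is constant on the union of the supports, and the normalization of the weights then forces $\mu(\varphi)=\nu(\varphi)$; the paper delegates this to the sentence preceding the theorem, so it is a cosmetic rather than a substantive omission.
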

\begin{proof} We only have to check the strong triangle inequality. Suppose that $\mu,\nu,\tau\in J_\omega(X)$ and $\hat d(\mu,\tau)<r$, $\hat d(\nu,\tau)<r$. Then, for every $\varphi\in \mathcal F_r$, we have $\mu(\varphi)=\tau(\varphi)=\nu(\varphi)$, whence $\hat d(\mu,\nu)<r$.
\end{proof}

\begin{prop} The map $x\mapsto\delta_x\colon X\to J_\omega(X)$ is an isometric embedding.
\end{prop}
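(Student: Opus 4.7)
The plan is to establish the equality $\hat d(\delta_x,\delta_y)=d(x,y)$ for arbitrary $x,y\in X$; being an isometric embedding (in particular injective) then follows immediately. I will handle the two inequalities separately, working from the definition of $\hat d$ as used in the preceding proof, i.e.\ with the indexing condition ``$\mu(\varphi)=\nu(\varphi)$ for all $\varphi\in\mathcal F_r$''.

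For the upper estimate, fix any $r>d(x,y)$. Then $y\in O_r(x)$, and since distinct members of $\mathcal O_r$ are disjoint, $O_r(x)=O_r(y)$. Any $\varphi\in\mathcal F_r$ is constant on this common ball, so $\varphi(x)=\varphi(y)$ and hence $\delta_x(\varphi)=\delta_y(\varphi)$. Thus every $r>d(x,y)$ belongs to the indexing set for $\hat d$, yielding $\hat d(\delta_x,\delta_y)\le d(x,y)$.

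For the lower estimate, fix $r<d(x,y)$, so that $O_r(x)$ and $O_r(y)$ are disjoint members of $\mathcal O_r$. The key point is to produce a function in $\mathcal F_r$ separating $\delta_x$ and $\delta_y$. The characteristic function $\varphi=\chi_{O_r(x)}$ does the job: in an ultrametric space, each open ball $O_r(x)$ is simultaneously closed (its complement is a union of other open $r$-balls), so $\varphi$ is continuous; moreover $\varphi\equiv 1$ on $O_r(x)$ and $\varphi\equiv 0$ on every other member of $\mathcal O_r$, so $\varphi\in\mathcal F_r$. Since $\delta_x(\varphi)=1\neq 0=\delta_y(\varphi)$, the chosen $r$ fails the defining condition, so no $r<d(x,y)$ lies in the indexing set. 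Hence $\hat d(\delta_x,\delta_y)\ge d(x,y)$.

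The only mildly non-routine ingredient is the clopen-balls observation that produces a separating $\mathcal F_r$-function for the lower bound; once it is invoked, both inequalities are straightforward from the definitions, so I do not anticipate any serious obstacle.
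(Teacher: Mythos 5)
Your proof is correct and follows the same route as the paper: the upper estimate via functions in $\mathcal F_r$ being constant on the common ball $O_r(x)=O_r(y)$ is exactly the paper's argument, and the reverse inequality (which the paper dismisses as ``simple'') is supplied by you with the standard separating function $\chi_{O_r(x)}$, which is indeed continuous and lies in $\mathcal F_r$ because $r$-balls in an ultrametric space are clopen. No gaps.
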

\begin{proof} Let $x,y\in X$ and $d(x,y)<r$. Then for every $\varphi\in \mathcal F_r(X)$, we have $\delta_x(\varphi)=\varphi(x)=\varphi(y)=\delta_y(\varphi)$, whence $\hat d(\delta_x,\delta_y)<r$.
Therefore, $\hat d(\delta_x,\delta_y)\le d(x,y)$. The reverse inequality is simple as well.
\end{proof}

\begin{prop} Let $f\colon X\to Y$ be a nonexpanding map of an ultrametric space $(X,d)$ into an ultrametric space $(Y,\varrho)$. Then the induced map $J_\omega(f)$ is also nonexpanding.
\end{prop}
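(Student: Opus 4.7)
The plan is to reduce the inequality $\hat\varrho(J_\omega(f)(\mu),J_\omega(f)(\nu))\le\hat d(\mu,\nu)$ to the observation that pullback along a nonexpanding map sends $\mathcal F_r(Y)$ into $\mathcal F_r(X)$. So the very first step is to unwind the definitions and note the identity
$$J_\omega(f)(\mu)(\varphi)=\mu(\varphi\circ f)$$
for $\mu=\vee_{i=1}^n\alpha_i\wedge\delta_{x_i}\in J_\omega(X)$ and $\varphi\in C(Y)$, which is immediate from the formulas for $J_\omega(f)$ and the evaluation of a max-min measure.

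Next I would establish the key pullback lemma: if $\varphi\in\mathcal F_r(Y)$, then $\varphi\circ f\in\mathcal F_r(X)$. Here one uses that $f$ is nonexpanding: for $x'\in O_r(x)$ we have $\varrho(f(x),f(x'))\le d(x,x')<r$, so $f(x')\in O_r(f(x))$, and since $\varphi$ is constant on $O_r(f(x))$, the composition $\varphi\circ f$ is constant on $O_r(x)$. Thus $f^{*}\mathcal F_r(Y)\subset\mathcal F_r(X)$.

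With this in hand the verification of the nonexpansion property is essentially a one-line check. Fix $r>\hat d(\mu,\nu)$; then $\mu(\psi)=\nu(\psi)$ for every $\psi\in\mathcal F_r(X)$. For any $\varphi\in\mathcal F_r(Y)$ the pullback $\varphi\circ f$ belongs to $\mathcal F_r(X)$, hence
$$J_\omega(f)(\mu)(\varphi)=\mu(\varphi\circ f)=\nu(\varphi\circ f)=J_\omega(f)(\nu)(\varphi).$$
Consequently $\hat\varrho(J_\omega(f)(\mu),J_\omega(f)(\nu))\le r$, and taking the infimum over such $r$ gives $\hat\varrho(J_\omega(f)(\mu),J_\omega(f)(\nu))\le\hat d(\mu,\nu)$.

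There is no real obstacle here; the only delicate point is recognising that the ultrametric $\hat d$ is really witnessed by the subspaces $\mathcal F_r$ (as the earlier triangle-inequality argument already uses), so that $f$ being nonexpanding pulls the separating family back into a separating family. Once this is observed the conclusion is essentially formal.
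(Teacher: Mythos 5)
Your proposal is correct and follows essentially the same route as the paper: pull back test functions in $\mathcal F_r(Y)$ along $f$ into $\mathcal F_r(X)$ (using the nonexpanding property) and then run the chain of equalities $J_\omega(f)(\mu)(\varphi)=\mu(\varphi f)=\nu(\varphi f)=J_\omega(f)(\nu)(\varphi)$. The only difference is that you spell out the short verification of the pullback lemma, which the paper merely asserts.
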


\begin{proof} Since the map $f$ is nonexpanding, $\varphi f\in \mathcal F_r(X)$, for any $ \varphi\in \mathcal F_r(Y)$.

If $\mu,\nu\in J_\omega(X)$ and $\hat d(\mu,\nu)<r$, then, for every $ \varphi\in \mathcal F_r(Y)$, we have $$J_\omega(f)(\mu)(\varphi)= \mu(\varphi f)= \nu(\varphi f)=J_\omega(f)=J_\omega(f)(\nu)(\varphi) $$
and therefore $\hat \varrho(J_\omega(f)(\mu),J_\omega(f)(\nu))<r$.
\end{proof}

We therefore obtain a functor $J_\omega$ on the category $\umet$.

\begin{prop}\label{p:2} If $\mu,\nu\in J_\omega(X)$, then the following are equivalent:
\begin{enumerate}
\item $\hat d(\mu,\nu)<r$;
\item $J_\omega(q_r)(\mu)=J_\omega(q_r)(\nu)$.
\end{enumerate}
\end{prop}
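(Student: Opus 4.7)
The plan is to translate both conditions into the common statement ``$\mu(\varphi)=\nu(\varphi)$ for every $\varphi\in\mathcal F_r$,'' and then to exploit finiteness of the supports to bridge from this equality to the strict inequality $\hat d(\mu,\nu)<r$. The key identification is $\mathcal F_r=\{\psi\circ q_r\mid\psi\in C(X/\mathcal O_r)\}$: the inclusion $\supseteq$ is clear, and for $\subseteq$, distinct points of $X/\mathcal O_r$ are at $d_r$-distance at least $r$ (by the ultrametric property and disjointness of distinct balls), so the $d_r$-topology on $X/\mathcal O_r$ is discrete and every real-valued $\psi$ is automatically continuous. Combined with the tautology $J_\omega(q_r)(\mu)(\psi)=\mu(\psi\circ q_r)$, immediate from how $J_\omega$ acts on a generator $\alpha_i\wedge\delta_{x_i}$, condition (2) becomes exactly ``$\mu=\nu$ on $\mathcal F_r$.''

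The forward direction is then routine. If $\hat d(\mu,\nu)<r$, the infimum definition supplies some $s<r$ with $\mu=\nu$ on $\mathcal F_s$. Since $s<r$ yields the finer partition $\mathcal O_s$, functions constant on $\mathcal O_r$-classes are in particular constant on $\mathcal O_s$-classes, so $\mathcal F_r\subset\mathcal F_s$, and the equality transfers to $\mathcal F_r$. By the identification, (2) follows.

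For the reverse direction, (2) and the identification give $\mu=\nu$ on $\mathcal F_r$, but $\hat d(\mu,\nu)<r$ requires the same equality on some $\mathcal F_s$ with $s<r$. Here the finite supports enter. Let $S=\mathrm{supp}(\mu)\cup\mathrm{supp}(\nu)$ and $D=\{d(x,y):x,y\in S,\ d(x,y)>0\}$, a finite set. Because $\mu(\varphi)$ and $\nu(\varphi)$ depend only on $\varphi|_S$, which in turn is an arbitrary real-valued function on $S$ constrained only to be constant on each set $O_t(x)\cap S$, the condition ``$\mu=\nu$ on $\mathcal F_t$'' is a function of the partition $\{O_t(x)\cap S : x\in S\}$ alone. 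This partition changes only as $t$ crosses an element of $D$, so as $t$ decreases from $r$ the partition is constant down to some $a<r$ (the largest element of $D\cup\{0\}$ strictly below $r$). Any $s\in(a,r)$ therefore satisfies $\mu=\nu$ on $\mathcal F_s$, giving $\hat d(\mu,\nu)\le s<r$. The main obstacle is precisely this last step; everything else is a direct unwinding of the definitions.
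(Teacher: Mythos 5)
Your proposal is correct and its core is the same as the paper's: identify $\mathcal F_r$ with the functions factoring through $q_r$ (using that $q_r$ is continuous and that $X/\mathcal O_r$ is uniformly discrete), so that condition (2) becomes ``$\mu=\nu$ on $\mathcal F_r$,'' and then relate this to $\hat d(\mu,\nu)<r$. The one genuine difference is that the paper passes from ``$\mu=\nu$ on $\mathcal F_r$'' to the strict inequality $\hat d(\mu,\nu)<r$ without comment (the infimum definition only gives $\le r$ directly), whereas you supply the missing justification: since the supports are finite, the partition $\{O_t(x)\cap S\}$ of $S=\supp\mu\cup\supp\nu$ only changes as $t$ crosses one of the finitely many distances realized in $S$, so equality on $\mathcal F_r$ propagates down to $\mathcal F_s$ for some $s<r$. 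That stabilization argument is exactly what is needed to make the authors' repeated use of ``whence $\hat d<r$'' rigorous, and your write-up is the more complete of the two; the only point to spell out fully is that every function on $S$ constant on the classes $O_t(x)\cap S$ extends to an element of $\mathcal F_t$, which holds because the balls $O_t(x)$ are clopen and partition $X$.
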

\begin{proof} 1)$\Rightarrow$2). For every $\varphi\colon X/\mathcal O_r\to\mathbb R$ we have $\varphi q_r\in\mathcal F_r$ and therefore $$J_\omega(q_r)(\mu)=\mu(\varphi q_r)=\nu(\varphi q_r)=J_\omega(q_r)(\nu).$$ Thus, $J_\omega(q_r)(\mu)=J_\omega(q_r)(\nu)$.

 2)$\Rightarrow$1). Let  $\varphi \in\mathcal F_r$, then $\varphi$ factors through $q_r$, i.e. there exists $\psi\colon X\to\mathbb R$ such that $\varphi=\psi q_r$. Then $$\mu(\varphi)=\mu(\psi q_r)=J_\omega(q_r)(\mu)(\varphi)=J_\omega(q_r)(\nu)(\varphi)=\nu(\psi q_r)=\nu(\varphi).$$
 Thus, $\hat d(\mu,\nu)<r$.
\end{proof}

In the sequel, given  a metric space $(X,d)$, we denote also by $d$ the (extended, i.e. taking values in $[0,\infty]$) metric on the set of maps from  a nonempty set $Y$ into $X$ defined by the formula: $d(f,g)=\sup\{d(f(x),g(x)\mid x\in X\}$.

\begin{prop} The functor $J_\omega$ is locally non-expansive, i.e., for every nonexpanding maps $f,g$ of an ultrametric space $(X,d)$ into an ultrametric space $(Y,\varrho)$ we have $\hat\varrho(J_\omega(f),J_\omega(g))\le \varrho(f,g)$.
\end{prop}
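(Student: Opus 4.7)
The plan is to reduce the statement to the criterion provided by Proposition~\ref{p:2}, via the quotient maps $q_r$. By the convention on the sup metric on function spaces, it suffices to fix $\mu \in J_\omega(X)$ and show $\hat\varrho(J_\omega(f)(\mu), J_\omega(g)(\mu)) \le \varrho(f,g)$, since taking the supremum over $\mu$ then yields the inequality for the induced maps. If $\varrho(f,g) = \infty$ there is nothing to prove, so fix an arbitrary $r > \varrho(f,g)$.

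The key observation is that $r > \varrho(f,g)$ means $\varrho(f(x), g(x)) < r$ for every $x \in X$, so $f(x)$ and $g(x)$ lie in a common $r$-ball $O_r(f(x)) = O_r(g(x)) \in \mathcal{O}_r$. Therefore $q_r \circ f = q_r \circ g$ as maps $X \to Y/\mathcal{O}_r$. Applying the functor $J_\omega$ and using its functoriality gives
\[
J_\omega(q_r) \circ J_\omega(f) = J_\omega(q_r \circ f) = J_\omega(q_r \circ g) = J_\omega(q_r) \circ J_\omega(g),
\]
so in particular $J_\omega(q_r)(J_\omega(f)(\mu)) = J_\omega(q_r)(J_\omega(g)(\mu))$ for every $\mu \in J_\omega(X)$.

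Now invoke Proposition~\ref{p:2} applied to the two max-min measures $J_\omega(f)(\mu), J_\omega(g)(\mu) \in J_\omega(Y)$: the equality of their $J_\omega(q_r)$-images is equivalent to $\hat\varrho(J_\omega(f)(\mu), J_\omega(g)(\mu)) < r$. Since this holds for every $r > \varrho(f,g)$ and every $\mu$, we conclude that
\[
\hat\varrho\bigl(J_\omega(f), J_\omega(g)\bigr) = \sup_{\mu \in J_\omega(X)} \hat\varrho\bigl(J_\omega(f)(\mu), J_\omega(g)(\mu)\bigr) \le \varrho(f,g).
\]

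There is no real obstacle here; the argument is essentially a one-line consequence of Proposition~\ref{p:2} together with functoriality of $J_\omega$. The only subtlety worth flagging is the compatibility of the ultrametric convention ``distance $<r$'' with passage to a supremum, which is handled simply by letting $r$ range over all values strictly greater than $\varrho(f,g)$ rather than taking $r=\varrho(f,g)$ directly.
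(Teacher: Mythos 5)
Your proof is correct and follows essentially the same route as the paper's: reduce to $q_r f = q_r g$ for $r > \varrho(f,g)$, apply functoriality of $J_\omega$, and invoke Proposition~\ref{p:2}. You simply spell out a few steps (why $q_r f = q_r g$, the passage to the supremum over $\mu$) that the paper leaves implicit.
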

\begin{proof} If $\varrho(f,g)=\infty$, then there is nothing to prove. Suppose that $\varrho(f,g)<r<\infty$. Then $q_rf=q_rg$, where $q_r\colon Y\to Y/\mathcal O_r(Y)$ is the quotient map. For every $\mu\in J_\omega(X)$, we obtain $$J_\omega(q_r)J_\omega(f)(\mu)=J_\omega(q_rf)(\mu)=J_\omega(q_rg)(\mu)J_\omega(q_r)J_\omega(g)(\mu)$$ and by Proposition \ref{p:2}, $\hat\varrho(J_\omega(f)(\mu),J_\omega(g)(\mu))<r$.
\end{proof}

\section{Categorical properties}

Let $(X,d)$ be an ultrametric space. Given a function $\varphi\in C(X)$, define $\bar\varphi\colon J_\omega(X)\to \mathbb R$ as follows: $\bar\varphi(\mu)= \mu(\varphi)$.

\begin{prop} If $\varphi\in \mathcal F_r(X)$, then $\bar\varphi\in \mathcal F_r(J_\omega(X))$.
\end{prop}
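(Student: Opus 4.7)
The statement asks us to show that if $\varphi$ is constant on each $r$-ball of $X$, then the induced functional $\bar\varphi$ on $J_\omega(X)$ is constant on each $r$-ball of $J_\omega(X)$ with respect to the ultrametric $\hat d$. My plan is to reduce this directly to the defining property of $\hat d$: membership in the same $r$-ball of $J_\omega(X)$ is exactly agreement against test functions in $\mathcal F_r(X)$.

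Concretely, I would argue as follows. Pick $\mu,\nu\in J_\omega(X)$ with $\hat d(\mu,\nu)<r$, so that I must verify $\bar\varphi(\mu)=\bar\varphi(\nu)$. By the definition of $\hat d$ as an infimum, there is some $r'<r$ such that $\mu(\psi)=\nu(\psi)$ for every $\psi\in\mathcal F_{r'}(X)$. The only point of genuine content is the containment $\mathcal F_r(X)\subseteq\mathcal F_{r'}(X)$: since $r'<r$ in an ultrametric space gives $O_{r'}(x)\subseteq O_r(x)$, any function constant on the elements of $\mathcal O_r$ is automatically constant on the (finer) elements of $\mathcal O_{r'}$. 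Applying this to $\varphi\in\mathcal F_r(X)$, I obtain $\mu(\varphi)=\nu(\varphi)$, i.e.\ $\bar\varphi(\mu)=\bar\varphi(\nu)$.

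Since $\mu,\nu$ were arbitrary elements of the same $r$-ball of $J_\omega(X)$, the function $\bar\varphi$ is constant on each element of $\mathcal O_r(J_\omega(X))$, which is precisely the statement $\bar\varphi\in\mathcal F_r(J_\omega(X))$.

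I do not foresee a real obstacle: the result is essentially a restatement of the definition of $\hat d$, once one observes the monotonicity $\mathcal F_r\subseteq\mathcal F_{r'}$ for $r'\le r$. The only place one could slip up is in confusing the direction of this inclusion, so I would make that explicit step the core of the write-up.
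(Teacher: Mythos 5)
Your proof is correct and follows the same route as the paper's one-line argument: take $\mu,\nu$ with $\hat d(\mu,\nu)<r$ and conclude $\bar\varphi(\mu)=\mu(\varphi)=\nu(\varphi)=\bar\varphi(\nu)$ directly from the definition of $\hat d$. The only difference is that you make explicit the monotonicity $\mathcal F_r\subseteq\mathcal F_{r'}$ for $r'<r$, which the paper leaves implicit; that is a reasonable refinement, not a different approach.
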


\begin{proof} Given $\mu,\nu\in J_\omega(X)$ with $\hat d(\mu,\nu)<r$, we see that $\bar\varphi(\mu)= \mu(\varphi)=\nu(\varphi)=\bar\varphi(\nu)$, whence $\bar\varphi\in \mathcal F_r(J_\omega(X))$.
\end{proof}

Let $M\in J_\omega^2(X)$. Define $\xi_X(M)$ by the condition $\xi_X(M)(\varphi)=M(\bar\varphi)$, for any $\varphi\in C(X)$. If $M=\vee_i\alpha_i\wedge\delta_{\mu_i}$ and $\mu_i=\vee_j\beta_{ij}\wedge\delta_{x_{ij}}$, then $$\xi_X(M)=\vee_i\vee_j\alpha_i\wedge\beta_{ij}\wedge\delta_{x_{ij}}.$$

\begin{prop} The map $\xi_X$ is nonexpanding.
\end{prop}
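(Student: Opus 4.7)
The plan is to unwind the definition of $\hat d$ at the two different levels ($J_\omega(X)$ and $J_\omega^2(X)$) and then invoke the immediately preceding proposition (that $\varphi\in\mathcal F_r(X)$ implies $\bar\varphi\in\mathcal F_r(J_\omega(X))$) as the one essential input. Concretely, I take $M,N\in J_\omega^2(X)$ with $\hat d(M,N)<r$ and aim to prove $\hat d(\xi_X(M),\xi_X(N))<r$; since $r$ can be chosen arbitrarily close to $\hat d(M,N)$, this will yield $\hat d(\xi_X(M),\xi_X(N))\le\hat d(M,N)$.

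First I unpack the hypothesis: by the definition of the ultrametric $\hat d$ applied to $J_\omega(J_\omega(X))$, the inequality $\hat d(M,N)<r$ means that $M(\Phi)=N(\Phi)$ for every $\Phi\in\mathcal F_r(J_\omega(X))$. Next I unpack the goal: by the definition of $\hat d$ applied to $J_\omega(X)$, it suffices to verify that $\xi_X(M)(\varphi)=\xi_X(N)(\varphi)$ for every $\varphi\in\mathcal F_r(X)$. From the defining formula $\xi_X(M)(\varphi)=M(\bar\varphi)$ (and similarly for $N$), this reduces to showing $M(\bar\varphi)=N(\bar\varphi)$ for every such $\varphi$.

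At this point the preceding proposition does all the work: if $\varphi\in\mathcal F_r(X)$, then $\bar\varphi\in\mathcal F_r(J_\omega(X))$. Hence $\bar\varphi$ is an admissible test functional $\Phi$ in the unpacked hypothesis, and so $M(\bar\varphi)=N(\bar\varphi)$, which is exactly what is needed. Combining the three steps gives $\hat d(\xi_X(M),\xi_X(N))\le\hat d(M,N)$, so $\xi_X$ is nonexpanding.

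The main obstacle is in fact nonexistent: the real content of the argument was packaged into the proposition $\varphi\in\mathcal F_r(X)\Rightarrow\bar\varphi\in\mathcal F_r(J_\omega(X))$, and here we only need to chase definitions. The only thing to be careful about is keeping straight which level of $\hat d$ is being used at each step (the one on $J_\omega(X)$ versus the one on $J_\omega^2(X)$); both are instances of the same construction, applied to the base ultrametric $d$ on $X$ and to the previously constructed ultrametric $\hat d$ on $J_\omega(X)$, respectively.
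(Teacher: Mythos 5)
Your argument is correct and is essentially identical to the paper's own proof: both reduce $\hat d(\xi_X(M),\xi_X(N))<r$ to checking $\xi_X(M)(\varphi)=M(\bar\varphi)=N(\bar\varphi)=\xi_X(N)(\varphi)$ for $\varphi\in\mathcal F_r(X)$, with the middle equality supplied by the preceding proposition that $\bar\varphi\in\mathcal F_r(J_\omega(X))$. No substantive differences.
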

\begin{proof} Let $d$ denote the ultrametric on $X$, then $\hat d$ and $\hat{\hat d}$ denote the ultrametrics on $J_\omega(X)$ and $J_\omega^2(X)$ respectively. Let $M,N\in J_\omega^2(X)$ and $\hat{\hat d}(M,N)<r$, for some $r>0$. Then, for every $\varphi\in \mathcal F(X)$ we obtain $$\xi_X(M)(\varphi)=M(\bar\varphi)=N(\bar\varphi)=\xi_X(N)(\varphi)$$ and therefore $\hat d(\xi_X(M),\xi_X(N))<r$.
\end{proof}

It is easy to verify that the maps $\xi_X$
%comprise
give rise to
a natural transformation of the functor $J_\omega^2$ to the functor $J_\omega$ in the category $\umet$.

\begin{thm}\label{t:monad} The triple $\mathbb J_\omega=(J_\omega,\delta,\xi)$ is a monad in the category $\umet$.
\end{thm}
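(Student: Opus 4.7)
The plan is to verify the three monad axioms — naturality of the unit $\delta$, and the two unit laws plus the associativity law for $\xi$ — noting that the fact that $\xi$ and $\delta$ are nonexpanding (hence genuinely live in $\umet$) was already established, and naturality of $\xi$ was already asserted.

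First I would dispose of the easy pieces. Naturality of $\delta$ is immediate from the defining formula $J_\omega(f)(\delta_x)=\delta_{f(x)}$. The right unit law $\xi_X\circ J_\omega(\delta_X)=\mathbf{1}_{J_\omega(X)}$ is a one-line calculation on a representative $\mu=\vee_i\alpha_i\wedge\delta_{x_i}$: one has $J_\omega(\delta_X)(\mu)=\vee_i\alpha_i\wedge\delta_{\delta_{x_i}}$ and then, using the convention $+\infty\wedge\delta_x=\delta_x$, the formula for $\xi_X$ gives $\vee_i\alpha_i\wedge\delta_{x_i}=\mu$. The left unit law $\xi_X\circ\delta_{J_\omega(X)}=\mathbf{1}_{J_\omega(X)}$ is cleanest from the functional description: for $\mu\in J_\omega(X)$ and $\varphi\in C(X)$,
$$\xi_X(\delta_\mu)(\varphi)=\delta_\mu(\bar\varphi)=\bar\varphi(\mu)=\mu(\varphi),$$
so $\xi_X(\delta_\mu)=\mu$.

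The main step — and the only one that requires any thought — is the associativity law $\xi_X\circ\xi_{J_\omega(X)}=\xi_X\circ J_\omega(\xi_X)$ on $J_\omega^3(X)$. The approach I would favour is to work functionally rather than manipulate triply-indexed lattice expressions. For $\varphi\in C(X)$, let $\bar\varphi\in C(J_\omega(X))$ be the function $\mu\mapsto\mu(\varphi)$, and let $\overline{\bar\varphi}\in C(J_\omega^2(X))$ be the function $M\mapsto M(\bar\varphi)$. For $\mathcal{M}\in J_\omega^3(X)$, direct unwinding of definitions gives
$$\xi_X\bigl(\xi_{J_\omega(X)}(\mathcal{M})\bigr)(\varphi)=\xi_{J_\omega(X)}(\mathcal{M})(\bar\varphi)=\mathcal{M}\bigl(\overline{\bar\varphi}\bigr).$$
For the other composition, the general identity $J_\omega(f)(\tau)(\psi)=\tau(\psi\circ f)$ together with $\bar\varphi\circ\xi_X=\overline{\bar\varphi}$ (which holds by the very definition of $\xi_X$) yields
$$\xi_X\bigl(J_\omega(\xi_X)(\mathcal{M})\bigr)(\varphi)=J_\omega(\xi_X)(\mathcal{M})(\bar\varphi)=\mathcal{M}(\bar\varphi\circ\xi_X)=\mathcal{M}\bigl(\overline{\bar\varphi}\bigr).$$
Since this holds for all $\varphi\in C(X)$, the two functionals coincide.

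The only delicate point is keeping track of which $J_\omega$ in the tower of three each object and morphism belongs to, and recognizing the identity $\bar\varphi\circ\xi_X=\overline{\bar\varphi}$; once that observation is made, both compositions collapse to the same evaluation $\mathcal{M}(\overline{\bar\varphi})$ and the proof is finished. An alternative, slightly more computational route is to write $\mathcal{M}=\vee_k\gamma_k\wedge\delta_{M_k}$ with $M_k=\vee_i\alpha_{ki}\wedge\delta_{\mu_{ki}}$ and $\mu_{ki}=\vee_j\beta_{kij}\wedge\delta_{x_{kij}}$, and verify that both sides reduce to $\vee_{k,i,j}\gamma_k\wedge\alpha_{ki}\wedge\beta_{kij}\wedge\delta_{x_{kij}}$; this is equivalent but notationally heavier.
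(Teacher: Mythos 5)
Your proof is correct, and for the associativity law it takes a genuinely different route from the paper. The paper verifies $\xi\circ J_\omega(\xi)=\xi\circ\xi_{J_\omega}$ by writing an element of $J_\omega^3(X)$ in the explicit form $\vee_i\alpha_i\wedge\delta_{M_i}$ with $M_i=\vee_j\beta_{ij}\wedge\delta_{\mu_{ij}}$ and checking that both composites collapse to $\vee_i\vee_j\alpha_i\wedge\beta_{ij}\wedge\mu_{ij}$ — essentially your ``alternative, more computational route,'' carried out at depth two rather than three because the paper absorbs the innermost level into the symbols $\mu_{ij}$. Your primary argument instead works at the level of functionals, reducing both sides to the single evaluation $\mathcal M(\overline{\bar\varphi})$ via the identity $\bar\varphi\circ\xi_X=\overline{\bar\varphi}$ (which is indeed just the definition of $\xi_X$) and the change-of-variables formula $J_\omega(f)(\tau)(\psi)=\tau(\psi\circ f)$. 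Since $J_\omega(X)$ is defined as a set of functionals, equality of functionals is exactly equality of elements, so your argument is complete; it buys a representation-free proof that avoids the triply-indexed bookkeeping, at the cost of hiding the lattice identity (distributivity of $\wedge$ over $\vee$) that the paper's computation makes visible. The unit laws in both treatments are the same one-line checks.
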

\begin{proof} Let $\mu=\vee_i\alpha_i\wedge\delta_{x_i}\in J_\omega(X)$. Then $$\xi_X J_\omega(\delta_X)(\mu)= \xi_X (\vee_i\alpha_i\wedge\delta_{\delta_{x_i}})= \vee_i\alpha_i\wedge\delta_{x_i}=\mu$$
and
$\xi_X \delta_{J_\omega(X)}(\mu)= \xi_X (\delta_\mu)=\mu$. Therefore $\xi J_\omega(\delta)=1_{J_\omega}=\xi \delta_{J_\omega}$.

Let $\mathfrak M=\vee_i\alpha_i\wedge\delta_{M_i}\in J_\omega^3(X)$, where $M_i=\vee_j\beta_{ij}\wedge\delta_{\mu_{ij}}$. Then \begin{align*}\xi_X J_\omega(\xi_X)(\mathfrak M)= & \xi_X(\vee_i\alpha_i\wedge\delta_{\xi_X(M_i)}) = \xi_X(\vee_i\alpha_i\wedge\delta_{\vee_j\beta_{ij}\wedge\mu_{ij}})\\ =&\vee_i\vee_j\alpha_i\wedge\beta_{ij}\wedge\mu_{ij} \\
=&  \vee_i\alpha_i\wedge (\vee_j\beta_{ij}\wedge\delta_{\mu_{ij}}) = \xi_X (\vee_i\alpha_i\wedge M_i)=
\xi_X \xi_{J_\omega(X)}(\mathfrak M)
\end{align*}
and therefore $\xi J_\omega(\xi)=\xi \xi_{J_\omega}$.
\end{proof}

\begin{prop} The spaces $I_\omega(X)$ and $J_\omega(X)$ are isometric.
\end{prop}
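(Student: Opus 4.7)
The plan is to exhibit an explicit isometric bijection $\Phi\colon J_\omega(X)\to I_\omega(X)$ obtained by a fixed change of variable on the scalar coefficients. First I would fix a strictly increasing bijection $\psi\colon\RR\to[-\infty,0]$ with $\psi(-\infty)=-\infty$ and $\psi(\infty)=0$; for instance $\psi(t)=-e^{-t}$ on $\mathbb{R}$, extended by its limits at $\pm\infty$. Being a strictly increasing bijection, $\psi$ commutes with finite suprema: $\psi(\vee_i t_i)=\vee_i\psi(t_i)$.

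Next, I would note that every $\mu\in J_\omega(X)$ has a canonical representation $\vee_{i=1}^n\alpha_i\wedge\delta_{x_i}$ with pairwise distinct $x_i$'s, obtained by combining repeated atoms via the identity $(\alpha\wedge\delta_x)\vee(\beta\wedge\delta_x)=(\alpha\vee\beta)\wedge\delta_x$. I would set
\[
\Phi(\mu)=\vee_{i=1}^n\psi(\alpha_i)\odot\delta_{x_i}.
\]
Since $\vee_i\alpha_i=\infty$ gives $\vee_i\psi(\alpha_i)=\psi(\infty)=0$, the image lies in $I_\omega(X)$; the same formula with $\psi^{-1}$ supplies a two-sided inverse, so $\Phi$ is a bijection.

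For the isometry property I would invoke Proposition~\ref{p:2} together with its analog for $I_\omega$ (which holds by an identical argument): for all $\mu_1,\mu_2\in J_\omega(X)$ and $r>0$, $\hat d(\mu_1,\mu_2)<r$ iff $J_\omega(q_r)(\mu_1)=J_\omega(q_r)(\mu_2)$, and similarly for $I_\omega$. In canonical form on the discrete quotient $X/\mathcal O_r$, the push-forward $J_\omega(q_r)(\mu)$ is encoded by the function
\[
a_r(y)=\vee\{\alpha_i\mid q_r(x_i)=y\},\qquad y\in X/\mathcal O_r,
\]
while $I_\omega(q_r)(\Phi(\mu))$ is encoded by $\psi\circ a_r$ (using that $\psi$ commutes with suprema). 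Since $\psi$ is a bijection, $a_{1,r}=a_{2,r}$ iff $\psi\circ a_{1,r}=\psi\circ a_{2,r}$, so $\hat d(\mu_1,\mu_2)<r\iff\hat d(\Phi(\mu_1),\Phi(\mu_2))<r$ for every $r>0$, whence the two distances coincide.

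The main obstacle I anticipate is the clean verification that $a_r$ canonically represents the push-forward --- i.e., that two normalized functions on a discrete ultrametric space define the same max-min (resp.\ max-plus) functional iff they agree pointwise. This uniqueness follows from testing an arbitrary functional against ``spike'' functions $\varphi$ taking a large positive value at one chosen point and a very negative value elsewhere, which reads off the coefficient at the chosen point. Once this uniqueness is secured, the rest of the argument is a short diagram chase using the order-preservation of $\psi$.
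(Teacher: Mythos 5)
Your proposal is correct and follows essentially the same route as the paper: your $\psi(t)=-e^{-t}$ is exactly the inverse of the paper's coefficient map $t\mapsto-\ln(-t)$, and both arguments reduce the isometry to the fact that the condition $\hat d(\mu,\nu)<r$ depends only on the $O_r$-ball-wise maxima of the coefficients. The only organizational difference is that you extract this via Proposition~\ref{p:2} together with uniqueness of the canonical representation on the quotient $X/\mathcal O_r$, whereas the paper reads the maxima off directly with the test functions $\varphi^x_t$; both verifications ultimately rest on the same spike-function idea, so this is a repackaging rather than a different proof.
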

\begin{proof}  Define a map $h=h_X\colon I_\omega(X)\to J_\omega(X)$ as follows. Let $\mu=\vee_i\alpha_i\odot\delta_{x_i}\in I_\omega(X)$. Define $h(\mu)=\vee_i-\ln(-\alpha_i)\wedge\delta_{x_i}\in J_\omega(X)$.

Suppose that $\hat d(\mu,\nu)<r$, where $\nu=\vee_j\beta_j\odot\delta_{y_j}\in I_\omega(X)$. For every $x\in X$ and $t\le0$, define $\varphi^x_t\colon X\to\mathbb R$ by the conditions:  $\varphi^x_t(y)=0$ if $y\in B_r(x)$ and  $\varphi^x_t(y)=t$ otherwise.

Then $$\max_{x_i\in B_r(x)}\alpha_i=\lim_{i\to-\infty}\mu(\varphi^x_t)= \lim_{i\to-\infty}\nu(\varphi^x_t)= \max_{y_j\in B_r(x)}\beta_j.$$

If $\varphi\in\mathcal F_r$, then $$\mu(\varphi)=\vee_i\alpha_i\odot\varphi(x_i)= \vee_{x\in X}\vee_{x_i\in B_r(x)}\alpha_i\odot\varphi(x_i)=\vee_{x\in X}\vee_{y_j\in B_r(x)}\beta_j\odot\varphi(y_j)$$
and therefore \begin{align*} h(\mu)(\varphi)=&\vee_i-\ln(-\alpha_i)\wedge\varphi(x_i)=\vee_{x\in X}\vee_{x_i\in B_r(x)}-\ln(-\alpha_i)\wedge\varphi(x_i)\\=& \vee_{x\in X}\vee_{y_j\in B_r(x)}-\ln(-\beta_j)\wedge\varphi(y_j)=h(\nu)(\varphi).\end{align*}

Thus, $\hat d(h(\mu),h(\nu))<r$ and we see that the map $h$ is nonexpanding. One can similarly prove that the inverse map $h^{-1}$ is also nonexpanding.
\end{proof}

\begin{prop} The class $\{h_X\}$ is a natural transformation  of the functor $I_\omega$ to the functor $J_\omega$.
\end{prop}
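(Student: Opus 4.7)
The plan is to verify naturality directly on generators. A natural transformation $h\colon I_\omega \to J_\omega$ requires that for every nonexpanding map $f\colon X \to Y$ of ultrametric spaces, the diagram
$$\begin{CD} I_\omega(X) @>{h_X}>> J_\omega(X) \\ @V{I_\omega(f)}VV @VV{J_\omega(f)}V \\ I_\omega(Y) @>>{h_Y}> J_\omega(Y) \end{CD}$$
commutes. Since $h_X$ was defined purely on finite presentations $\mu=\vee_i\alpha_i\odot\delta_{x_i}$, and both $I_\omega(f)$ and $J_\omega(f)$ act by applying $f$ to the point masses while leaving the coefficients untouched, it suffices to chase such a $\mu$ around the square.

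First I would take $\mu=\vee_i\alpha_i\odot\delta_{x_i}\in I_\omega(X)$ and follow the top-right path: by definition $h_X(\mu)=\vee_i(-\ln(-\alpha_i))\wedge\delta_{x_i}\in J_\omega(X)$, and then the formula for the functor $J_\omega$ gives
$$J_\omega(f)(h_X(\mu))=\vee_i(-\ln(-\alpha_i))\wedge\delta_{f(x_i)}.$$
Next I would follow the bottom-left path: by the definition of $I_\omega(f)$ on a finite-support idempotent measure, $I_\omega(f)(\mu)=\vee_i\alpha_i\odot\delta_{f(x_i)}$, and then applying $h_Y$ yields
$$h_Y(I_\omega(f)(\mu))=\vee_i(-\ln(-\alpha_i))\wedge\delta_{f(x_i)}.$$
The two expressions coincide, which gives $J_\omega(f)\,h_X=h_Y\,I_\omega(f)$.

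I do not foresee any genuine obstacle: the previous proposition has already established that $h_X$ is a well-defined isometry (independent of the choice of finite representation of $\mu$), so the above computation on representatives is unambiguous, and the coefficient-preserving nature of $I_\omega(f)$ and $J_\omega(f)$ makes the square commute on the nose. The only mildly delicate point worth mentioning is the boundary case $\alpha_i=-\infty$, but there the corresponding summand contributes trivially on both sides (since $-\ln(-\alpha_i)=-\infty$ and $\alpha_i\odot\delta_{f(x_i)}$ is the absorbing summand), so the equality is preserved. This completes the verification that $\{h_X\}$ is a natural transformation from $I_\omega$ to $J_\omega$.
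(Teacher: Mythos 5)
Your proof is correct and follows essentially the same route as the paper: both chase a generic finite-support measure $\vee_i\alpha_i\odot\delta_{x_i}$ around the naturality square and observe that both compositions yield $\vee_i(-\ln(-\alpha_i))\wedge\delta_{f(x_i)}$. Your additional remark on the $\alpha_i=-\infty$ case is a harmless extra check not made explicit in the paper.
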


\begin{proof} Let $f\colon X\to Y$ be a map and $\mu=\vee_i\alpha_i\odot\delta_{x_i}\in I_\omega(X)$. Then \begin{align*}J_\omega(f)h_X(\mu)=&J_\omega(f)(\vee_i-\ln(-\alpha_i)\wedge\delta_{x_i}) = \vee_i-\ln(-\alpha_i)\wedge\delta_{f(x_i)}\\=&h_Y(\vee_i\alpha_i\odot\delta_{f(x_i)})=h_YI_\omega(f)(\mu).\end{align*}
\end{proof}
\begin{cor} The functors $I_\omega$ and $J_\omega$ are isomorphic.
\end{cor}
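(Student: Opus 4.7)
The plan is to assemble the corollary directly from the two preceding propositions, so almost no new work is required. The previous proposition constructs, for each ultrametric space $X$, a map $h_X\colon I_\omega(X)\to J_\omega(X)$ and shows that both $h_X$ and its set-theoretic inverse $h_X^{-1}$ are nonexpanding. Hence each $h_X$ is an invertible morphism in the category $\umet$, i.e.\ an isomorphism there. The second proposition shows that the family $\{h_X\}_{X\in|\umet|}$ is a natural transformation $h\colon I_\omega\to J_\omega$.

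First I would invoke the standard categorical fact that a natural transformation all of whose components are isomorphisms is itself a natural isomorphism. Applied to $h$, this yields an inverse natural transformation $h^{-1}\colon J_\omega\to I_\omega$ whose components are exactly the maps $h_X^{-1}$; naturality of $h^{-1}$ follows by pre- and post-composing the naturality square for $h$ with $h_X^{-1}$ and $h_Y^{-1}$. Together, $h$ and $h^{-1}$ witness that $I_\omega$ and $J_\omega$ are isomorphic functors on $\umet$.

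I do not anticipate any real obstacle: the nontrivial work, namely the verification that the formula $h_X(\vee_i\alpha_i\odot\delta_{x_i})=\vee_i -\ln(-\alpha_i)\wedge\delta_{x_i}$ defines a nonexpanding bijection and that the resulting family is natural in $X$, has already been dispatched in the two preceding propositions. The only thing worth spelling out in the write-up is the one-line observation that in $\umet$ a morphism is an isomorphism precisely when it is a bijection with nonexpanding inverse, so that $h_X$ qualifies and the componentwise inverses automatically splice together into the required natural inverse.
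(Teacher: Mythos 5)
Your proposal is correct and matches the paper's intended derivation: the corollary is stated without a separate proof precisely because it follows immediately from the two preceding propositions, exactly as you combine them (componentwise isomorphisms in $\umet$ plus naturality of $\{h_X\}$ yield a natural isomorphism). No gaps.
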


\begin{rem}\label{r:1} Let $\alpha\colon [-\infty,0]\to[-\infty,\infty]$ be an order-preserving  bijection. Then the maps $g^\alpha_X\colon I_\omega(X)\to J_\omega(X)$ defined by the formula $g^{\alpha}_X(\vee_it_i\odot\delta_{x_i}) = \vee_i \alpha(t_i)\wedge\delta_{x_i}$, determine an isomorphism of the functors $I_\omega$ and $J_\omega$.
\end{rem}
\begin{prop}\label{p:iso} Every isomorphism of the functors $I_\omega$ and $J_\omega$ is of the form $g^\alpha$ (see Remark \ref{r:1}), for some order-preserving   bijection $\alpha\colon [-\infty,0]\to[-\infty,\infty]$.
\end{prop}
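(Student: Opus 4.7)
The plan is to extract an order-preserving bijection $\alpha\colon[-\infty,0]\to[-\infty,\infty]$ from the action of a natural isomorphism $\Psi=\{\Psi_X\}\colon I_\omega\to J_\omega$ on two-point spaces, and then propagate the identity $\Psi=g^\alpha$ to all of $\umet$ via naturality.

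First, naturality with respect to the maps $\{\ast\}\to X$ picking out points, combined with the fact that $I_\omega(\{\ast\})=J_\omega(\{\ast\})$ is a singleton, forces $\Psi_X(\delta_x)=\delta_x$ for every $x\in X$. Then naturality with respect to the isometric inclusion $i\colon\supp(\mu)\hookrightarrow X$ shows that $\Psi_X(\mu)$ lies in the image of $J_\omega(i)$, whence $\supp(\Psi_X(\mu))\subseteq\supp(\mu)$.

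Next, on a two-point space $Y=\{y_0,y_1\}$ every element of $I_\omega(Y)$ has the form $\mu_t=t\odot\delta_{y_0}\vee\delta_{y_1}$ (or its swap) for $t\in[-\infty,0]$, and support preservation gives $\Psi_Y(\mu_t)=\gamma(t)\wedge\delta_{y_0}\vee\eta(t)\wedge\delta_{y_1}$ for some $\gamma(t),\eta(t)$. The main obstacle, as I see it, is showing $\eta(t)=\infty$, i.e.\ that the ``maximally weighted'' Dirac is preserved. To get this I pass to an auxiliary three-point space $Z=Y\cup\{z\}$ with all pairwise distances equal, and consider $\tilde\mu_t=t\odot\delta_{y_0}\vee\delta_{y_1}\vee\delta_z$. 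Swap symmetry $y_1\leftrightarrow z$ constrains $\Psi_Z(\tilde\mu_t)$ to have matching coefficients at $y_1$ and $z$. Naturality with the projection $Z\to\{y_1,z\}$ collapsing $y_0\mapsto y_1$ sends $\tilde\mu_t$ to $\delta_{y_1}\vee\delta_z$, whose $\Psi$-image is $\delta_{y_1}\vee\delta_z$ itself (by Dirac preservation plus the $y_1\leftrightarrow z$ swap symmetry on $\{y_1,z\}$); comparing the two sides as functionals on $C(\{y_1,z\})$ forces the common coefficient of $\Psi_Z(\tilde\mu_t)$ at $y_1,z$ to be $\infty$. Pulling back along the projection $Z\to Y$ sending $z\mapsto y_1$ then yields $\eta(t)=\infty$, and we set $\alpha(t):=\gamma(t)$.

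Order-preservation of $\alpha$ is established by a similar three-point argument: for $t\le s\le 0$, applying $\Psi$ to $t\odot\delta_x\vee s\odot\delta_y\vee\delta_z$ on an equilateral three-point space and then projecting by collapsing $y\mapsto x$ yields, via naturality, $\alpha(t)\vee\alpha(s)=\alpha(s)$. Bijectivity of $\alpha$ comes from bijectivity of $\Psi_Y$ on each ``arm'' of $I_\omega(Y)$, with the endpoints $\alpha(-\infty)=-\infty$ and $\alpha(0)=\infty$ pinned by Dirac preservation and symmetry at $t=0$; independence of $\alpha$ from the ambient two-point space follows by comparing spaces of different diameter via nonexpanding bijections. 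Finally, for a general $\mu=\vee_i t_i\odot\delta_{x_i}\in I_\omega(X)$ (after relabeling so that $t_n=0$), I apply naturality with each projection $X\to\{x_k,x_n\}$ collapsing $x_j\mapsto x_n$ for $j\neq k$ to read off the coefficient of $\Psi_X(\mu)$ at $x_k$ as $\alpha(t_k)$, yielding $\Psi_X(\mu)=g^\alpha_X(\mu)$ in general.
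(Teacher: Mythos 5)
Your proof is correct and follows essentially the same route as the paper's: reduce to two- and three-point spaces, use swap symmetry and naturality with respect to collapsing retractions to pin down the coefficients and the monotonicity of $\alpha$, then recover the general finite case by projecting onto two-point subspaces. The only local difference is the key step forcing the coefficient at the $0$-weighted Dirac to equal $+\infty$: you derive it by a direct computation on an equilateral triple via the collapse $Z\to\{y_1,z\}$, whereas the paper argues by contradiction from injectivity of $k_{\{y,z\}}$ (otherwise two distinct measures would be sent to the same symmetric one); both arguments work, and you are in fact slightly more careful than the paper about Dirac preservation and about the independence of $\alpha$ from the diameter of the two-point space.
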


\begin{proof} Let $k\colon I_\omega\to J_\omega$ be an isomorphism. Let $X=\{x,y,z\}$, where $x,y,z$ are distinct points. Since the functor isomorphisms preserve the supports,    we obtain $$k_X(t\odot\delta_x\vee t\odot\delta_y\vee \delta_z)=\alpha(t)\wedge\delta_x\vee \alpha(t)\wedge\delta_y\vee \beta(t)\wedge\delta_z,$$ where $\alpha(t)\vee\beta(t)=+\infty$.

We are going to show that $\beta(t)=+\infty$, for every $t\in [-\infty,0]$. First note that $k_X(\delta_x\vee \delta_y\vee \delta_z)=\delta_x\vee \delta_y\vee \delta_z$. Suppose that, for some $t\in(-\infty,0)$, we have $\beta(t)<+\infty$. Denote by $r\colon X\to\{y,z\}$ the retraction that sends $x$ to $z$. Then, since in this case $\alpha(t)=+\infty$, we obtain  $$k_{\{y,z\}}(I_\omega(r)(t\odot\delta_x\vee t\odot\delta_y\vee \delta_z))=k_{\{y,z\}}(t\odot\delta_y\vee \delta_z)=\delta_y\vee \delta_z,$$
which is impossible, because the natural transformations preserve the symmetry with respect to the nontrivial permutation of $\{y,z\}$.

Thus, $$k_X(t\odot\delta_x\vee t\odot\delta_y\vee \delta_z)=\alpha(t)\wedge\delta_x\vee \alpha(t)\wedge\delta_y\vee \delta_z$$ and identifying the points $x$ and $y$ we conclude that  $k_{\{y,z\}}( t\odot\delta_y\vee \delta_z)=(\alpha(t)\wedge\delta_y\vee \delta_z)$. We see therefore that $k=g^\alpha$.

It is clear that $\alpha$ is a bijection of $[-\infty,0]$ onto $[-\infty,\infty]$.
Suppose now that $X=\{x_1,x_2,\dots,x_n\}$, where $x_1,x_2,\dots,x_n$ are distinct points. Let $\mu=\vee_{i=1}^n t_i\odot\delta_{x_i}$ be such that $t_1=0$. Given $i>1$, consider a retraction $r_i\colon X\to\{x_1,x_i\}$ that sends every $x_j$, $j\neq i$, to $x_1$. Then, by what was proved above, $$k_{\{x_1,x_i\}}I_\omega(r_i)(\mu)=k_{\{x_1,x_i\}}(\delta_{x_1}\vee t_i\odot\delta_{x_1})=\delta_{x_1}\vee \alpha(t_i)\wedge\delta_{x_1}=J_\omega(r_i)(k_X(\mu))$$
and collecting the data for all $i>1$ we conclude that $k_X(\mu)=\vee_{i=1}^n\alpha(t_i)\wedge\delta_{x_i}$.

We are going to show that the map $\alpha$ is isotone. Again, let $X=\{x,y,z\}$, where  where $x,y,z$ are distinct points. Suppose that $t_1,t_2\in[-\infty,0]$ and $t_1<t_2$. Then $k_X(t_1\odot\delta_{x}\vee t_2\odot\delta_y\vee\delta_z)=\alpha(t_1)\wedge\delta_{x}\vee \alpha(t_2)\wedge\delta_{y}\vee\delta_z$.

For a retraction $r\colon X\to\{y,z\}$ the retraction that sends $x$ to $y$, we obtain $$I_\omega(r)(t_1\odot\delta_{x}\vee t_2\odot\delta_y\vee\delta_z)=t_2\odot\delta_y\vee\delta_z$$
and therefore
$$I_\omega(r)(\alpha(t_1)\wedge\delta_{x}\vee \alpha(t_2)\wedge\delta_{y}\vee\delta_z)=\alpha(t_2)\wedge\delta_{y}\vee\delta_z,$$
whence we conclude that $\alpha(t_1)<\alpha(t_2)$. This finishes the proof of the proposition.
\end{proof}

\begin{thm} The  monads $\mathbb{I}_\omega$ and $\mathbb{J}_\omega$ are not isomorphic. \end{thm}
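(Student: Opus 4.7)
The plan is to use Proposition \ref{p:iso} to parametrize any candidate isomorphism at the functor level and then show that none respects the monad multiplications. Suppose for contradiction that $k\colon\mathbb{I}_\omega\to\mathbb{J}_\omega$ is a monad isomorphism. In particular $k$ is a natural isomorphism of functors $I_\omega\to J_\omega$, so by Proposition \ref{p:iso} one has $k=g^\alpha$ for some order-preserving bijection $\alpha\colon[-\infty,0]\to[-\infty,\infty]$.

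Next I would extract the two monad axioms. The unit axiom, applied at $x\in X$, reads $\alpha(0)\wedge\delta_x=\delta_x=+\infty\wedge\delta_x$, hence $\alpha(0)=+\infty$. The multiplication $\zeta_X$ of $\mathbb{I}_\omega$ is defined exactly as $\xi_X$ in Theorem \ref{t:monad} via $\zeta_X(M)(\varphi)=M(\bar\varphi)$, which unpacks to $\zeta_X(\vee_i a_i\odot\delta_{\mu_i})=\vee_{i,j}(a_i+b_{ij})\odot\delta_{x_{ij}}$ for $\mu_i=\vee_j b_{ij}\odot\delta_{x_{ij}}$. Thus $\mathbb{I}_\omega$ combines coefficients by addition $\odot$, while $\xi_X$ combines them by $\wedge$; the core obstruction is that no order-preserving bijection $\alpha$ can transport $+$ on $[-\infty,0]$ into the meet $\wedge$ on $[-\infty,\infty]$.

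To produce the contradiction concretely I would take $X=\{x,y\}$, pick real numbers $s,t<0$, and test the compatibility axiom $\xi_X\,g^\alpha_{J_\omega(X)}\,I_\omega(g^\alpha_X)=g^\alpha_X\,\zeta_X$ on
$$M\;=\;s\odot\delta_{\mu_1}\vee 0\odot\delta_{\delta_x}\in I_\omega^2(X),\qquad \mu_1=\delta_x\vee t\odot\delta_y.$$
A direct computation, using $\alpha(0)=+\infty$ and absorption identities of the form $a\wedge\delta_x\vee\delta_x=\delta_x$, yields
$$g^\alpha_X\bigl(\zeta_X(M)\bigr)=\delta_x\vee\alpha(s+t)\wedge\delta_y,\qquad \xi_X\,g^\alpha_{J_\omega(X)}\,I_\omega(g^\alpha_X)(M)=\delta_x\vee\bigl(\alpha(s)\wedge\alpha(t)\bigr)\wedge\delta_y.$$
Equating the $\delta_y$-coefficients and invoking monotonicity and bijectivity of $\alpha$ would force $s+t=s\wedge t$, which fails for every finite $s,t<0$. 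The main obstacle is exactly this bookkeeping: the term $0\odot\delta_{\delta_x}$ is needed so that $M$ satisfies the normalization $\vee_i a_i=0$, but one has to verify that on each side of the axiom it contributes only a harmless $\delta_x$ summand which does not interfere with the comparison in the $\delta_y$-coefficient, where the operations $+$ and $\wedge$ visibly disagree.
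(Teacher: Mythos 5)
Your proposal is correct and follows essentially the same route as the paper: invoke Proposition \ref{p:iso} to reduce any candidate monad isomorphism to some $g^\alpha$, then test the multiplication-compatibility axiom on an explicit element of $I_\omega^2(X)$ for a small finite $X$, where the discrepancy between addition of weights (for $\zeta_X$) and the minimum of weights (for $\xi_X$) contradicts injectivity of $\alpha$. The paper uses a three-point space with fixed weights $-1,-2,-3$ while you use a two-point space with parameters $s,t$, but the mechanism — forcing $\alpha(s+t)=\alpha(s)\wedge\alpha(t)$ and hence $s+t=\min(s,t)$ — is identical.
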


\begin{proof} Suppose the contrary and let a natural transformation $h\colon I_\omega\to J_\omega$ be an  isomorphism of $\mathbb{I}_\omega$ and $\mathbb{J}_\omega$. Then, by Proposition \ref{p:iso}, $h=g^\alpha$, for some order-preserving   bijection $\alpha\colon [-\infty,0]\to[-\infty,\infty]$.

Let $X=\{a,b,c\}$. Suppose that $M=((-1)\odot\delta_{\mu})\vee\delta_{\nu}\in I_\omega^2(X)$, where $\mu=(-2)\odot\delta_a\vee\delta_b$, $\nu=(-3)\odot\delta_b\vee\delta_c$.

Then \begin{align*}h_X\zeta_X(M)=&h_X((-3)\odot\delta_a \vee(-3)\odot\delta_b\vee\delta_c)\\=&\alpha(-3)\wedge\delta_a \vee \alpha(-3)\wedge\delta_b \vee\delta_c.
\end{align*}
On the other hand, \begin{align*}&\xi_XJ_\omega(h_X)h_{I_\omega(X)}(M)=\xi_XJ_\omega(h_X)
(\alpha(-1)\wedge\delta_{\mu}\vee\delta_{\nu})\\=& \xi_X(\alpha(-1)\wedge\delta_{h_X(\mu)}\vee\delta_{h_X(\nu)})=\xi_X(\alpha(-1)\wedge \delta_{(\alpha(-2)\wedge\delta_a \vee \delta_c)}\vee \delta_{(\alpha(-3)\wedge\delta_b \vee\delta_c)}\\= & (\alpha(-2)\wedge\delta_a \vee\alpha(-3)\wedge\delta_b \vee\delta_c)\neq h_X\zeta_X(M).
\end{align*}
\end{proof}

Let $\mu=\vee_i\alpha_i\wedge\delta_{x_i}\in J_\omega(X)$, $\nu=\vee_j\beta_j\wedge\delta_{y_j}\in J_\omega(Y)$. Define $\mu\otimes\nu\in J_\omega(X\times Y)$ by the formula:
$$\mu\otimes\nu=\vee_{ij}(\alpha_i\vee\beta_j)\wedge\delta_{(x_i,y_j)}.$$
\begin{lemma}\label{l:tensor} The map $$(\mu,\nu)\mapsto \mu\otimes\nu\colon J_\omega(X)\times J_\omega(Y)\to J_\omega(X\times Y)$$ is nonexpanding.
\end{lemma}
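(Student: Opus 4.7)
The plan is to reduce the claim to Proposition \ref{p:2}, exploiting the fact that in the product of two ultrametric spaces equipped with the max-metric, every $r$-neighborhood factors as $O_r((x,y))=O_r(x)\times O_r(y)$. This yields a canonical identification $(X\times Y)/\mathcal{O}_r\cong (X/\mathcal{O}_r)\times(Y/\mathcal{O}_r)$ as ultrametric spaces, under which the quotient map $q_r^{X\times Y}$ becomes the product map $q_r^X\times q_r^Y$.

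Given $\mu,\mu'\in J_\omega(X)$ with $\hat d(\mu,\mu')<r$ and $\nu,\nu'\in J_\omega(Y)$ with $\hat d(\nu,\nu')<r$, my aim is to show that $\hat d(\mu\otimes\nu,\mu'\otimes\nu')<r$. First I would apply Proposition \ref{p:2} to the hypotheses, obtaining $J_\omega(q_r^X)(\mu)=J_\omega(q_r^X)(\mu')$ and $J_\omega(q_r^Y)(\nu)=J_\omega(q_r^Y)(\nu')$.

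The key step is to verify the naturality identity
$$J_\omega(q_r^X\times q_r^Y)(\mu\otimes\nu)=J_\omega(q_r^X)(\mu)\otimes J_\omega(q_r^Y)(\nu),$$
which I would check by a direct computation on representatives $\mu=\vee_i\alpha_i\wedge\delta_{x_i}$, $\nu=\vee_j\beta_j\wedge\delta_{y_j}$: both sides unfold to $\vee_{ij}(\alpha_i\vee\beta_j)\wedge\delta_{(q_r^X(x_i),q_r^Y(y_j))}$. Combining this identity with the identification $q_r^{X\times Y}=q_r^X\times q_r^Y$ yields $J_\omega(q_r^{X\times Y})(\mu\otimes\nu)=J_\omega(q_r^{X\times Y})(\mu'\otimes\nu')$, and a second application of Proposition \ref{p:2} produces $\hat d(\mu\otimes\nu,\mu'\otimes\nu')<r$.

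Because the metric on $J_\omega(X)\times J_\omega(Y)$ is by convention the maximum of the two component metrics, the implication just established is exactly the nonexpanding property of $(\mu,\nu)\mapsto\mu\otimes\nu$. I do not foresee a serious obstacle; the only step requiring a moment's care is the identification $(X\times Y)/\mathcal{O}_r\cong(X/\mathcal{O}_r)\times(Y/\mathcal{O}_r)$, which reduces to the routine observation that $\max(d(x,x'),d(y,y'))<r$ is equivalent to the conjunction of $d(x,x')<r$ and $d(y,y')<r$.
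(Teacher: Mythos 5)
Your proposal is correct and follows essentially the same route as the paper's own proof, which likewise passes through $J_\omega(q_r)(\mu\otimes\nu)=J_\omega(q_r)(\mu)\otimes J_\omega(q_r)(\nu)$ and Proposition~\ref{p:2}; you merely make explicit the identification $(X\times Y)/\mathcal{O}_r\cong(X/\mathcal{O}_r)\times(Y/\mathcal{O}_r)$ and the naturality computation that the paper leaves implicit.
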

\begin{proof} Suppose that $\hat d((\mu,\nu),(\mu',\nu'))<r$. Then $$J_\omega(q_r)(\mu\otimes\nu)=J_\omega(q_r)(\mu)\otimes J_\omega(q_r)(\nu)=J_\omega(q_r)(\mu')\otimes J_\omega(q_r)(\nu')=J_\omega(q_r)(\mu'\otimes\nu')$$ and we conclude that $$\hat d(J_\omega(q_r)(\mu\otimes\nu),J_\omega(q_r)(\mu'\otimes\nu'))<r.$$
Therefore, the mentioned map is nonexpanding.
\end{proof}
\begin{rem} The results concerning the operation $\otimes$ can be easily extended over the products of arbitrary number of factors.
\end{rem}

\begin{thm}\label{t:klei} There exists an extension of the symmetric power functor $SP^n$ onto the category of ultrametric spaces and nonexpanding maps with values that are max-min measures of finite supports.
\end{thm}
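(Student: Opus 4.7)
The plan is to invoke Theorem~\ref{ext_kleisli}, which reduces the task to exhibiting a natural transformation $\Xi\colon SP^n\circ J_\omega\to J_\omega\circ SP^n$ in $\umet$ satisfying the unit condition $\Xi\circ SP^n(\delta)=\delta_{SP^n}$ and the coassociativity condition $\xi_{SP^n}\circ J_\omega(\Xi)\circ\Xi_{J_\omega}=\Xi\circ SP^n(\xi)$. The natural candidate is obtained by tensoring coordinate-wise and then symmetrising: for $[\mu_1,\dots,\mu_n]\in SP^n(J_\omega(X))$ set
\[
\Xi_X([\mu_1,\dots,\mu_n])\;=\;J_\omega(\pi_X)\bigl(\mu_1\otimes\cdots\otimes\mu_n\bigr),
\]
where $\pi_X\colon X^n\to SP^n(X)$ is the orbit map and the $n$-fold tensor is the iteration of Lemma~\ref{l:tensor} permitted by the remark following it.

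Well-definedness on the symmetric quotient is automatic, since a permutation of the $\mu_i$'s amounts to a coordinate permutation of $X^n$ which $\pi_X$ collapses. Nonexpansiveness of $\Xi_X$ is obtained by composing the iterated Lemma~\ref{l:tensor} with the nonexpanding map $J_\omega(\pi_X)$, and naturality in $X$ reduces to the identity $SP^n(f)\circ\pi_X=\pi_Y\circ f^n$ together with the naturality of $\otimes$. The unit coherence condition is immediate: $\delta_{x_1}\otimes\cdots\otimes\delta_{x_n}=\delta_{(x_1,\dots,x_n)}$, whose $J_\omega(\pi_X)$-image is $\delta_{[x_1,\dots,x_n]}$.

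The core of the argument is the coassociativity condition, which I would verify by direct coordinate expansion. Writing $M_k=\vee_i\alpha_{k,i}\wedge\delta_{\mu_{k,i}}$ with $\mu_{k,i}=\vee_j\beta_{k,i,j}\wedge\delta_{x_{k,i,j}}$ and applying the formula $\xi_Y(\vee_i\alpha_i\wedge\delta_{\nu_i})=\vee_{i,j}(\alpha_i\wedge\beta_{ij})\wedge\delta_{y_{ij}}$ of Theorem~\ref{t:monad} together with the tensor formula, both sides produce a $\vee$-sum, indexed by tuples $(i_1,j_1,\dots,i_n,j_n)$, of Diracs at the orbits $[x_{1,i_1,j_1},\dots,x_{n,i_n,j_n}]\in SP^n(X)$, but with \emph{different} per-tuple coefficients: schematically, one gets $\vee_k(\alpha_{k,i_k}\wedge\beta_{k,i_k,j_k})$ on one side and $(\vee_k\alpha_{k,i_k})\wedge(\vee_k\beta_{k,i_k,j_k})$ on the other, and the lattice inequality between these forms is strict in general. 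What saves the argument is the orbit identification: many index tuples contribute to the same orbit of $SP^n(X)$, and the combining identity $\alpha\wedge\delta_y\vee\alpha'\wedge\delta_y=(\alpha\vee\alpha')\wedge\delta_y$ aggregates their weights. The main obstacle, which I would execute by an orbit-by-orbit bookkeeping, is to exhibit for each orbit $O\in SP^n(X)$ a tuple and a $\sigma\in S_n$ that realise the relevant $\vee$-maximum, thereby promoting the tuple-wise inequality to equality after $S_n$-aggregation and showing that both sides assign the same max-min weight to $O$.
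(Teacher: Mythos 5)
Your map is exactly the paper's: $\theta_X[\mu_1,\dots,\mu_n]=J_\omega(\pi_G)(\mu_1\otimes\dots\otimes\mu_n)$, and your overall strategy (well-definedness, nonexpansiveness via Lemma~\ref{l:tensor}, the unit condition, then Theorem~\ref{ext_kleisli}) coincides with the paper's. The gap is in the one step you yourself call the core of the argument: the coassociativity condition is neither actually verified nor correctly diagnosed. If the $n$-fold tensor combines coefficients by $\wedge$ --- the same operation that $\xi_X$ uses in $\xi_X(\vee_i\alpha_i\wedge\delta_{\mu_i})=\vee_{i,j}\alpha_i\wedge\beta_{ij}\wedge\delta_{x_{ij}}$, and the one the paper's own computation employs --- then there is no per-tuple discrepancy at all: for each index tuple $(i_1,j_1,\dots,i_n,j_n)$ both sides attach the coefficient $\bigl(\bigwedge_k\alpha_{k,i_k}\bigr)\wedge\bigl(\bigwedge_k\beta_{k,i_k,j_k}\bigr)=\bigwedge_k\bigl(\alpha_{k,i_k}\wedge\beta_{k,i_k,j_k}\bigr)$ to $\delta_{[x_{1,i_1,j_1},\dots,x_{n,i_n,j_n}]}$, by associativity and commutativity of $\wedge$. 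The identity holds term by term and your ``orbit-by-orbit bookkeeping'' is vacuous.

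Your two schematic expressions $\vee_k(\alpha_{k,i_k}\wedge\beta_{k,i_k,j_k})$ versus $(\vee_k\alpha_{k,i_k})\wedge(\vee_k\beta_{k,i_k,j_k})$ arise only if you take the displayed formula $\mu\otimes\nu=\vee_{ij}(\alpha_i\vee\beta_j)\wedge\delta_{(x_i,y_j)}$ literally, combining coefficients across coordinates by $\vee$. But with that tensor the coassociativity condition genuinely \emph{fails}, and orbit identification cannot rescue it, because in general the relevant orbits have singleton preimages and nothing aggregates. Concretely, take $n=2$, $X=\{a,b,c,e\}$ with all distances $1$, $M_1=\delta_{\mu}$ where $\mu=(s\wedge\delta_a)\vee\delta_b$, and $M_2=(t\wedge\delta_{\delta_c})\vee\delta_{\delta_e}$ with $s,t$ finite. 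The coefficient of $\delta_{[a,c]}$ in $\theta_X SP^2(\xi_X)([M_1,M_2])$ is $s\vee t$, while in $\xi_X J_\omega(\theta_X)\theta_{J_\omega(X)}([M_1,M_2])$ it is $+\infty$; the tuple $(a,c)$ is the only support point of either side lying in the orbit $[a,c]$, so there is no $\sigma$ and no second tuple to realise the maximum. Hence the deferred bookkeeping cannot succeed with that tensor, and the proof as written leaves the decisive step both unproved and aimed at an identity that is false. The repair is to use the $\wedge$-combined tensor (the $\vee$ inside the displayed definition of $\otimes$ is inconsistent with the rest of the construction), after which the verification reduces to the routine termwise identity above.
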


\begin{proof} Let $X$ be an ultrametric space. Define a map $\theta_X\colon SP^n_G(J_\omega(X))\to J_\omega(SP^n_G(X))$ by the formula:
$$\theta_X[\mu_1,\dots,\mu_n]= J_\omega(p_G)(\mu_1\otimes\dots\otimes\mu_n).$$

First, we remark that $\theta_X$ is well-defined. Indeed, if $[\mu_1,\dots,\mu_n]=[\nu_1,\dots,\nu_n]$, then there is a permutation $\sigma\in G$ such that $\nu_i=\mu_{\sigma(i)}$, for every $i\in\{1,\dots,n\}$. Denote by $h_\sigma\colon X^n\to X^n$ the map that sends $(x_1,\dots,x_n)$ to $(x_{\sigma(1)},\dots,x_{\sigma(n)})$, then \begin{align*}J_\omega(p_G)(\mu_1\otimes\dots\otimes\mu_n)=&J_\omega(p_Gh_\sigma)(\mu_1\otimes\dots\otimes\mu_n)\\=&
J_\omega(p_G)J_\omega(h_\sigma)(\mu_1\otimes\dots\otimes\mu_n)=J_\omega(p_G)(\nu_1\otimes\dots\otimes\nu_n).
\end{align*}

Next, note that $\theta_X$ is nonexpanding, i.e., a morphism of the category $\umet$. This easily follows from Lemma \ref{l:tensor} and the fact that the map $\pi_G$ is nonexpanding.

Let $(x_1,\dots,x_n)\in X^n$. Then \begin{align*}\theta_XSP^n_G(\delta_X)(x_1,\dots,x_n)=&J_\omega(p_G)(\delta_{x_1}\otimes\dots\otimes\delta_{x_n}) \\ =& J_\omega(p_G)(\delta_{(x_1,\dots,x_n)})=\delta_{p_G(x_1,\dots,x_n)}=\delta_{[x_1,\dots,x_n]}.\end{align*}

Now let  $M_1,\dots,M_n\in J_\omega^2(X)$ and $M_i=\vee\alpha_{ik}\wedge\delta_{\mu_{ik}}$, where $\mu_{ik}\in J_\omega(X)$. Then \begin{align*}\xi_XJ_\omega(\theta_X)\theta_{J_\omega(X)}& ([M_1,\dots,M_n])=\xi_X J_\omega(\theta_X)J_\omega(\pi_{GJ_\omega(X)})(M_1\otimes\dots\otimes M_n)\\ =&J_\omega(\theta_X)J_\omega(\pi_{GJ_\omega(X)})\left(\bigvee(\alpha_{1i_1}\wedge\dots\wedge \alpha_{ni_n})\wedge\delta_{(\mu_{1i_1},\dots,\mu_{ni_n})}\right)\\ =& \mu_X J_\omega(\theta_X)\left(\bigvee(\alpha_{1i_1}\wedge\dots\wedge \alpha_{ni_n})\wedge\delta_{[\mu_{1i_1},\dots,\mu_{ni_n}]}\right) \\ = & \xi_X\left(\bigvee(\alpha_{1i_1}\wedge\dots\wedge \alpha_{ni_n})\wedge\delta_{\theta_X([\mu_{1i_1},\dots,\mu_{ni_n}])}\right) \\ =& \bigvee(\alpha_{1i_1}\wedge\dots\wedge \alpha_{ni_n})\wedge\theta_X([\mu_{1i_1},\dots,\mu_{ni_n}]).\end{align*}

On the other hand,
\begin{align*}\theta_XSP^n_G(\xi_X)& ([M_1,\dots,M_n])=\theta_X([\theta_X(M_1),\dots,\theta_X(M_n)])\\ = & \theta_X([\vee\alpha_{1i_1}\wedge\mu_{1i_1},\dots, \vee\alpha_{1i_1}\wedge\mu_{ni_n}])\\ =& J_\omega(\pi_G)((\vee\alpha_{1i_1}\wedge\mu_{1i_1})\otimes\dots\otimes(\vee\alpha_{1i_1}\wedge\mu_{ni_n}))\\ =& J_\omega(\pi_G)\left(\bigvee(\alpha_{1i_1}\wedge\dots\wedge \alpha_{ni_n})\wedge(\mu_{1i_1}\otimes\dots\otimes\mu_{ni_n})\right) \\ =& \bigvee(\alpha_{1i_1}\wedge\dots\wedge \alpha_{ni_n})\wedge J_\omega(\pi_G)(\mu_{1i_1}\otimes\dots\otimes\mu_{ni_n}),
\end{align*}
 i.e.,  $\xi_XJ_\omega(\theta_X)\theta_{J_\omega(X)}=\theta_XSP^n_G(\xi_X)$. Applying Theorem \ref{ext_kleisli} we obtain that the functor $SP^n_G$ admits an extension onto the Kleisli category of the monad $\mathbb J_\omega$.\end{proof}

\begin{prop} The class of maps $\mathrm{supp}=(\mathrm{supp}_X)\colon J_\omega(X)\to\exp X$ is a morphism of the monad $\mathbb{J}_\omega$ into the hyperspace monad $\mathbb H$.
\end{prop}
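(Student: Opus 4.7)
The plan is to verify the three conditions defining a morphism of monads: (i) $\mathrm{supp}=(\mathrm{supp}_X)$ is a natural transformation $J_\omega\to\exp$ in $\umet$, (ii) the unit axiom $\mathrm{supp}_X\delta_X=s_X$, and (iii) the multiplication axiom $u_X\exp(\mathrm{supp}_X)\mathrm{supp}_{J_\omega(X)}=\mathrm{supp}_X\xi_X$.

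The key identity from which nearly everything follows is
$$\mathrm{supp}_Y(J_\omega(f)(\rho))=f(\mathrm{supp}_X(\rho))$$
for every nonexpanding $f\colon X\to Y$ and every $\rho\in J_\omega(X)$; it is immediate from the formula $J_\omega(f)(\vee_i\alpha_i\wedge\delta_{x_i})=\vee_i\alpha_i\wedge\delta_{f(x_i)}$ and the definition of $\mathrm{supp}$. Since $\exp(f)(A)=f(A)$, this identity is already the naturality square. Combining it with Proposition~\ref{p:2} also yields nonexpansiveness of each component $\mathrm{supp}_X$: if $\hat d(\mu,\nu)<r$, then $J_\omega(q_r)(\mu)=J_\omega(q_r)(\nu)$ by that proposition, whence $q_r(\mathrm{supp}(\mu))=q_r(\mathrm{supp}(\nu))$, so every point of $\mathrm{supp}(\mu)$ lies within distance $r$ of some point of $\mathrm{supp}(\nu)$ and conversely, giving $d_H(\mathrm{supp}(\mu),\mathrm{supp}(\nu))<r$. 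I expect this nonexpansiveness argument to be the main obstacle, as it is the only place where the ultrametric $\hat d$ on $J_\omega(X)$ intervenes in a nontrivial way.

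The unit axiom is immediate from $\mathrm{supp}_X(\delta_x)=\{x\}=s_X(x)$.

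For the multiplication axiom I would fix a representative $M=\vee_i\alpha_i\wedge\delta_{\mu_i}\in J_\omega^2(X)$ with $\mu_i=\vee_j\beta_{ij}\wedge\delta_{x_{ij}}$ (all coefficients $>-\infty$, since $-\infty$ terms affect neither side). Then $\mathrm{supp}_{J_\omega(X)}(M)=\{\mu_i\}_i$, $\exp(\mathrm{supp}_X)$ replaces each $\mu_i$ by $\mathrm{supp}(\mu_i)=\{x_{ij}\}_j$, and $u_X$ produces $\bigcup_i\{x_{ij}\}_j=\{x_{ij}:i,j\}$. On the other hand, the explicit formula $\xi_X(M)=\vee_{i,j}(\alpha_i\wedge\beta_{ij})\wedge\delta_{x_{ij}}$ recorded in the definition of $\xi_X$ shows that $\mathrm{supp}_X(\xi_X(M))=\{x_{ij}:i,j\}$ as well, since each coefficient $\alpha_i\wedge\beta_{ij}$ remains $>-\infty$. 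Both composites therefore coincide, completing the verification.
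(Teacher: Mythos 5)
Your proof is correct and follows essentially the same route as the paper: the unit axiom is checked on Dirac measures, and the multiplication axiom is verified on a representative $M=\vee_i\alpha_i\wedge\delta_{\mu_i}$ with all coefficients $>-\infty$ via the explicit formula $\xi_X(M)=\vee_{i,j}(\alpha_i\wedge\beta_{ij})\wedge\delta_{x_{ij}}$, so that both composites yield $\{x_{ij}\mid i,j\}$. The only difference is that you additionally verify that $\mathrm{supp}$ is a nonexpanding natural transformation (naturality from the formula for $J_\omega(f)$, nonexpansiveness via Proposition~\ref{p:2} and the quotients $q_r$), a point the paper's proof leaves implicit; both of your arguments there are sound.
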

\begin{proof} Clearly, for every $x\in X$, where $X$ is an ultrametric space, we have $s_X(x)=\{x\}=\mathrm{supp}(\delta_x)$.

Now let $M\in J_\omega^2(X)$, $M=\vee_{i=1}^n\alpha_i\wedge\mu_i$. We may assume that $\alpha_i>-\infty$, for all $i$. Let also $\mu_i=\vee_{j=1}^{m_i}\beta_{ij}\wedge\delta_{x_{ij}}$, where $\beta_{ij}>-\infty$, for all $i,j$.

Then $\xi_X(M)=\vee_{ij}\alpha_i\wedge\beta_{ij}\wedge\delta_{x_{ij}}$ and  \begin{align*}&u_X\exp(\mathrm{supp}_X)\mathrm{supp}_{J_\omega(X)}(M)=u_X\exp(\mathrm{supp}_X)(\{\mu_1,\dots,\mu_n\})\\ = & u_X\{\{x_{i1},\dots,x_{im_i}\}\mid i=1,\dots,n\}=\{x_{ij}\mid i=1,\dots,n,\ j=1,\dots,m_i\}\\=&\mathrm{supp}(\xi_X(M)).
\end{align*}
\end{proof}

\section{Completion}

Denote by $\cumet$ the category of complete ultrametric spaces and nonexpanding maps. Given a complete ultrametric space $(X,d)$, denote by $J(X)$ the completion of the space $J_\omega X$.

For any morphism $f\colon X\to Y$ of the category $\umet$ there exists a unique morphism $J(F)\colon J(X)\to J(Y)$ that extends $J_\omega(f)$. We therefore obtain a functor  $J\colon\cumet\to\cumet$.

The results of the previous section have their counterpart also for the functor $J$. In particular, we have the following result.

\begin{prop} The functors $I$ and $J$ are isomorphic.
\end{prop}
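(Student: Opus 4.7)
The plan is to lift the isomorphism $h\colon I_\omega\to J_\omega$ constructed in the previous section to the completions. Recall that for each ultrametric space $X$ the map $h_X\colon I_\omega(X)\to J_\omega(X)$ was shown to be a bijection with both $h_X$ and $h_X^{-1}$ nonexpanding, hence an isometry of ultrametric spaces. By the universal property of the completion of a metric space, every isometry between dense subsets of complete metric spaces extends uniquely to an isometry of the ambient spaces. Applied to the isometry $h_X\colon I_\omega(X)\to J_\omega(X)$ between the dense subsets of $I(X)$ and $J(X)$, this produces a unique nonexpanding extension $\bar h_X\colon I(X)\to J(X)$, and similarly a unique nonexpanding extension of $h_X^{-1}$; by uniqueness of extensions these are mutually inverse, so each $\bar h_X$ is an isometric bijection.

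Next I would verify naturality of the family $\{\bar h_X\}$ with respect to morphisms $f\colon X\to Y$ in $\cumet$. By construction of the functor $J$ on $\cumet$, the map $J(f)\colon J(X)\to J(Y)$ is the unique nonexpanding extension of $J_\omega(f)$, and similarly for $I(f)$. On the dense subspace $I_\omega(X)\subset I(X)$ the identity $J(f)\bar h_X=\bar h_Y I(f)$ reduces to $J_\omega(f)h_X=h_Y I_\omega(f)$, which was already established. Both compositions are nonexpanding (hence continuous) maps from the complete space $I(X)$ into the complete space $J(Y)$ agreeing on a dense subset, so they coincide on all of $I(X)$.

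Thus $\bar h=\{\bar h_X\}$ is a natural isomorphism between the functors $I$ and $J$ on $\cumet$. The only mildly delicate point is the compatibility of the extension procedure with composition and identities needed to define the functors $I$ and $J$ themselves, but this is standard for the completion of a locally nonexpansive functor on $\umet$ and is implicit in the statement that $J\colon\cumet\to\cumet$ is a well-defined functor. No genuine obstacle arises; the whole argument is a routine application of the universal property of completion combined with the $\omega$-level result already at our disposal.
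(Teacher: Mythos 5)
Your argument is correct and is precisely the routine completion argument that the paper leaves implicit (the paper states this proposition without proof, merely noting that the results of the previous section carry over to $J$). Extending the isometries $h_X$ uniquely to the completions and checking naturality on the dense subsets $I_\omega(X)$ is exactly the intended justification, so nothing further is needed.
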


We keep the notation $\delta_X$ for the natural embedding $x\mapsto\delta_x\colon X\to J(X)$.
Also, for any complete $X$, the set $J_\omega^2(X)$ is dense in $J^2(X)$ and therefore the nonexpanding map $\xi_X\colon J_\omega^2(X)\to J_\omega(X)$ can be uniquely extended to a nonexpanding map $J^2(X)\to J(X)$. We keep the notation $\xi_X$ for the latter map.

\begin{thm} The triple $\mathbb J=(J,\delta,\xi)$ is a monad  in the category $\cumet$.
\end{thm}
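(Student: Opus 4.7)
The plan is to reduce every axiom to the already-proved Theorem \ref{t:monad} for $\mathbb{J}_\omega$ by a continuity/density argument. All maps in sight — the unit $\delta_X\colon X\to J(X)$, the extended multiplication $\xi_X\colon J^2(X)\to J(X)$, and $J(f)$ for any morphism $f$ of $\cumet$ — are nonexpanding by construction and hence continuous. Since the monad identities $\xi J(\delta)=\xi\delta_J=1_{J}$ and $\xi J(\xi)=\xi\xi_J$ are equalities between nonexpanding maps $J(X)\to J(X)$ and $J^3(X)\to J(X)$ respectively, it suffices to verify them on a dense subset; the natural candidate is the set of iterated finite-support measures.

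The key lemma to prove first is that $J_\omega^{k}(X)$ is dense in $J^{k}(X)$ for $k=1,2,3$. The base case $k=1$ holds by definition of $J(X)$. For the inductive step, I observe that for any ultrametric space $(Y,d)$ the formation of a finite max-min combination is nonexpanding: whenever $d(x_i,y_i)<r$ for all $i$, then for any $\varphi\in\mathcal F_r(Y)$ one has $\varphi(x_i)=\varphi(y_i)$ and hence
$$\hat d\bigl(\textstyle\vee_i\alpha_i\wedge\delta_{x_i},\vee_i\alpha_i\wedge\delta_{y_i}\bigr)<r.$$
Applied to $Y=J^k(X)$, this shows that every element of $J_\omega(J^k(X))$ can be approximated by elements of $J_\omega(J_\omega^k(X))=J_\omega^{k+1}(X)$, using the inductive density of $J_\omega^k(X)$ in $J^k(X)$. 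Combining with the density of $J_\omega(J^k(X))$ in $J(J^k(X))=J^{k+1}(X)$ from the definition of the completion gives the desired density.

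With this in hand I can finish the proof. On $J_\omega^{k}(X)\subseteq J^{k}(X)$ the extended $\delta$, $\xi$, and $J(\cdot)$ coincide with their finite-support originals by uniqueness of the nonexpanding extension, so Theorem \ref{t:monad} guarantees that both sides of each monad identity agree pointwise there. By continuity of both sides and density, the identities hold on all of $J^k(X)$. The same density argument handles the naturality of the extended $\xi$: for a morphism $f\colon X\to Y$ in $\cumet$, the square $\xi_Y\circ J^2(f)=J(f)\circ\xi_X$ commutes on the dense subspace $J_\omega^2(X)$ by the $\umet$-version, and hence on all of $J^2(X)$. I expect the only delicate point to be the density claim for $k=3$; once that is in place, the monad axioms themselves require no further calculation beyond what was done for $\mathbb{J}_\omega$.
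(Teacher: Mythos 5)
Your proof is correct and follows essentially the same route as the paper, which simply asserts that the theorem ``follows from the proof of Theorem \ref{t:monad}''; your density-plus-continuity reduction (including the lemma that $\vee_i\alpha_i\wedge\delta_{(\cdot)}$ is nonexpanding in the points, which gives density of $J_\omega^k(X)$ in $J^k(X)$) is exactly the argument the paper leaves implicit.
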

\begin{proof} Follows from the proof of Theorem \ref{t:monad}.
\end{proof}
The monad $\mathbb J$ is called the {\em max-min measure monad} in the category $\cumet$.
The support map $$\vee_{i=1}^n\alpha_i\wedge\delta_{x_i}\mapsto \{x_1,\dots,x_n\}\colon J_\omega(X)\to \exp X$$
can be extended to the map $\mathrm{supp}\colon J(X)\to\exp X$, which we also call the support map.

\begin{thm} The class of support maps $J_\omega(X)\to \exp X$ is a morphism of the max-min measure monad to the hyperspace monad  in the category $\cumet$.
\end{thm}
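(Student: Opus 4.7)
The plan is to lift the analogous monad-morphism property proved in the previous section for $\mathbb{J}_\omega$ up to the completion monad $\mathbb{J}$ by a density-and-continuity argument.

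First I would verify the unit condition $\mathrm{supp}_X\circ\delta_X=s_X$. For any $x\in X$ the element $\delta_X(x)=\delta_x$ already lies in $J_\omega(X)$, so this identity is immediate from the finite-support version established at the end of the previous section.

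For compatibility with multiplication I need
\[
u_X\circ\exp(\mathrm{supp}_X)\circ\mathrm{supp}_{J(X)}=\mathrm{supp}_X\circ\xi_X
\]
as maps $J^2(X)\to\exp X$. Both sides are compositions of nonexpanding maps between complete ultrametric spaces, hence nonexpanding themselves, and by the previous section they already coincide on $J_\omega^2(X)\subset J^2(X)$. The proof therefore reduces to showing that $J_\omega^2(X)$ is dense in $J^2(X)$. I would argue this in two steps: first, since $J_\omega(X)$ is dense in $J(X)$ by definition of the completion and the operation $(\mu_1,\dots,\mu_n)\mapsto\vee_i\alpha_i\wedge\delta_{\mu_i}$ is continuous in each $\mu_i$, the subset $J_\omega^2(X)=J_\omega(J_\omega(X))$ is dense in $J_\omega(J(X))$; second, $J_\omega(J(X))$ is dense in $J(J(X))=J^2(X)$ by the outer completion. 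Two nonexpanding maps agreeing on a dense subset must coincide, and this gives the desired identity on all of $J^2(X)$.

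The main obstacle is to ensure that the extension $\mathrm{supp}_X\colon J(X)\to\exp X$ mentioned just before the theorem really is nonexpanding, since continuity is a prerequisite for the density argument. For this I would verify that $\mathrm{supp}_X\colon J_\omega(X)\to\exp X$ is already nonexpanding: if $\hat d(\mu,\nu)<r$, then by Proposition~\ref{p:2} we have $J_\omega(q_r)(\mu)=J_\omega(q_r)(\nu)$, and comparing the supports of the two sides in $X/\mathcal{O}_r$ forces $q_r(\mathrm{supp}(\mu))=q_r(\mathrm{supp}(\nu))$, which in ultrametric terms yields $d_H(\mathrm{supp}(\mu),\mathrm{supp}(\nu))<r$. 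Because $\exp X$ is complete whenever $X$ is, a unique nonexpanding extension to $J(X)\to\exp X$ exists, and it agrees set-theoretically with the support map described in the text.
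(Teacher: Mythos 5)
Your proof is correct and follows exactly the route the paper intends: the paper states this theorem without proof, relying on the finite-support proposition of the previous section together with the density of $J_\omega^2(X)$ in $J^2(X)$ (which the paper asserts) and the uniqueness of nonexpanding extensions from dense subsets. Your additional verification that $\mathrm{supp}_X\colon J_\omega(X)\to\exp X$ is nonexpanding via Proposition~\ref{p:2} fills a step the paper also leaves implicit, and it is argued correctly.
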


\begin{thm} There exists an extension of the symmetric power functor $SP^n$ onto the  Kleisli category of the monad $\mathbb J$.
\end{thm}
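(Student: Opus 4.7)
The strategy is to invoke Theorem \ref{ext_kleisli} and thus produce a natural transformation $\theta\colon SP^n_G J\to J SP^n_G$ satisfying the unit and multiplication compatibilities with respect to $\mathbb J=(J,\delta,\xi)$. The map $\theta$ on the finite-support subfunctor was already built in the proof of Theorem \ref{t:klei}, so the plan is to lift both the construction and the two defining identities from $J_\omega$ to $J$ by density, using the fact that all the ingredients are nonexpanding maps between ultrametric spaces whose targets are complete.

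The first step is to extend Lemma \ref{l:tensor} to the completions. The tensor pairing $\otimes\colon J_\omega(X)\times J_\omega(Y)\to J_\omega(X\times Y)\subset J(X\times Y)$ is nonexpanding into a complete target, and $J_\omega(X)\times J_\omega(Y)$ is dense in $J(X)\times J(Y)$ in the max-metric, so the universal property of ultrametric completions gives a unique nonexpanding extension $\otimes\colon J(X)\times J(Y)\to J(X\times Y)$. With this extended tensor in place I would define, for $\mu_1,\dots,\mu_n\in J(X)$,
$$\theta_X[\mu_1,\dots,\mu_n]=J(\pi_G)(\mu_1\otimes\cdots\otimes\mu_n),$$
exactly as in Theorem \ref{t:klei}. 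Well-definedness on the orbit space follows verbatim from the argument there, using that $J(\pi_G)J(h_\sigma)=J(\pi_G)$ for $\sigma\in G$; nonexpansiveness follows from the extended Lemma \ref{l:tensor} together with nonexpansiveness of $J(\pi_G)$ and of the quotient map $SP^n_G(J(X))\twoheadleftarrow J(X)^n$.

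It remains to verify naturality of $\theta$ in $X$ and conditions 1) and 2) of Theorem \ref{ext_kleisli}. Each of these amounts to an equality of two nonexpanding maps with common complete ultrametric codomain and with common domain $SP^n_G J(X)$ (respectively $SP^n_G J^2(X)$). Since these maps agree on the dense subspaces $SP^n_G J_\omega(X)$ and $SP^n_G J^2_\omega(X)$ by Theorem \ref{t:klei}, they must agree everywhere. The only real chore, rather than a genuine obstacle, is the careful bookkeeping needed to check that every composite appearing in the verification (involving $\otimes$, $J(\pi_G)$, $\delta$, $\xi$, and $SP^n_G(\xi)$) is itself nonexpanding after passing to the completions, so that the density argument legitimately propagates each identity; this is handled uniformly by the universal property of ultrametric completions applied to nonexpanding maps with complete target, the same device already used to construct $J$ and $\xi$.
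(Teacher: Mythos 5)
Your proposal is correct and is essentially the argument the paper intends: the paper's proof consists only of the remark that it is ``similar to the proof of Theorem \ref{t:klei}'', and your density/completion argument --- extending $\otimes$ and $\theta_X$ by the universal property of completions and propagating the identities of Theorem \ref{ext_kleisli} from the dense finite-support subspaces via continuity of nonexpanding maps into complete targets --- is the standard way to make that remark precise. No discrepancy to report.
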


\begin{proof} Similar to the proof of Theorem \ref{t:klei}.

\end{proof}

The category mentioned in the above theorem is nothing but the category of ultrametric spaces and nonexpanding max-min
measure-valued maps.

\begin{thm} The monads $\mathbb I$ and $\mathbb J$ are not isomorphic.
\end{thm}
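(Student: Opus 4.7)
The plan is to reduce the statement to the already-proved fact that $\mathbb{I}_\omega$ and $\mathbb{J}_\omega$ are not isomorphic as monads. Suppose, for contradiction, that $h\colon\mathbb{I}\to\mathbb{J}$ were an isomorphism of monads in $\cumet$. I would first exploit that every finite ultrametric space is already complete and that on such a space $X$ one has $I(X)=I_\omega(X)$ and $J(X)=J_\omega(X)$; moreover, by construction, the unit and multiplication of $\mathbb J$ are extensions of those of $\mathbb J_\omega$, and likewise for $\mathbb I$. Consequently, the restriction of $h$ to the full subcategory of finite ultrametric spaces is a natural isomorphism compatible with the units and multiplications of $\mathbb I_\omega$ and $\mathbb J_\omega$ on finite spaces.

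Next I would invoke Proposition~\ref{p:iso}. Its proof uses only two- and three-point ultrametric spaces, together with retractions among them, and never leaves the category of finite ultrametric spaces. The argument therefore applies verbatim to the restriction of $h$ and produces an order-preserving bijection $\alpha\colon[-\infty,0]\to[-\infty,\infty]$ such that $h_X=g^\alpha_X$ for every finite $X$.

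Finally I would transplant the explicit counterexample used for $\mathbb I_\omega,\mathbb J_\omega$. Take $X=\{a,b,c\}$, $\mu=(-2)\odot\delta_a\vee\delta_b$, $\nu=(-3)\odot\delta_b\vee\delta_c$, and $M=((-1)\odot\delta_\mu)\vee\delta_\nu$. Since $X$ is finite, $I_\omega(X)$ is compact and hence complete, so $M\in I_\omega(I_\omega(X))\subset I^2(X)$, and all pieces of the previous three-point calculation take place inside $I_\omega$ and $J_\omega$. Because $\zeta_X$ and $\xi_X$ extend their finite-support counterparts, the very computation from the proof of the corresponding $\omega$-theorem yields $h_X\zeta_X(M)\neq \xi_X J(h_X)h_{I(X)}(M)$, contradicting the monad morphism axiom for $h$.

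The main obstacle is purely bookkeeping: one must be explicit that on the finite space $\{a,b,c\}$ (and on $I_\omega(\{a,b,c\})$, which is itself a compact, hence complete, ultrametric space lying in $\cumet$) the data of $\mathbb I,\mathbb J$ literally coincide with the data of $\mathbb I_\omega,\mathbb J_\omega$, and that naturality of $h$ with respect to the isometric inclusions of these finite pieces forces $h$ to act there by $g^\alpha$. Once these identifications are set up, no idea beyond those in the previous section is needed; the computation in the finite-support theorem is reused without modification.
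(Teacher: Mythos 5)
Your proposal is correct and follows essentially the same route as the paper, whose entire proof is the remark that any morphism of monads restricts to the submonads generated by the finite-support subfunctors, thereby reducing to the already-established non-isomorphism of $\mathbb I_\omega$ and $\mathbb J_\omega$; your version implements the same reduction by restricting to finite spaces and carefully checking that the data of $\mathbb I,\mathbb J$ coincide there with those of $\mathbb I_\omega,\mathbb J_\omega$. One small correction: for finite $X$ the space $I_\omega(X)$ is \emph{not} compact (it is uniformly discrete with continuum many points), but it is complete precisely because it is uniformly discrete, which is all your argument actually needs.
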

\begin{proof} This follows from the fact that every morphism of monads generates a morphisms of submonads generated by the subfunctors of finite support.
\end{proof}

\section*{Open problems}

Define the max-min measures for the compact Hausdorff spaces in the spirit of \cite{Z}.
Is the extension of the symmetric power functor $SP^n$ onto the category of ultrametric spaces and max-min-measure-valued maps unique? This is known to be valid for the case of probability measures.

The class of $K$-ultrametric spaces was recently defined and investigated by Savchenko. Can analogs of the results of this paper be proved for the  $K$-ultrametric spaces? See \cite{Sa} where analogous questions are considered.

\section*{Acknowledgements}

This research was supported by the
Slovenian Research Agency grants P1-0292-0101 and J1-4144-0101.

% ------------------------------------------------------------------------
%\bibliographystyle{amsplain}
%\bibliography{99}

\end{document}